\documentclass[11pt]{article}
\usepackage{amsmath,amsfonts,amssymb, amsthm,enumerate,graphicx}
\usepackage{tikz,authblk}
\usepackage{subfig}
\usepackage{url}
\usepackage{hyperref}
\usepackage{stackengine,scalerel}
\usetikzlibrary {decorations.pathmorphing, decorations.pathreplacing, decorations.shapes}

\usetikzlibrary{arrows.meta}

\newtheorem{theorem} {{\textsf{Theorem}}}
\newtheorem{proposition}[theorem]{{\textsf{Proposition}}}
\newtheorem{corollary}[theorem]{{\textsf{Corollary}}}

\newtheorem{remark}[theorem]{{\textsf{Remark}}}
\newtheorem{example}[theorem]{{\textsf{Example}}}
\newtheorem{lemma}[theorem]{{\textsf{Lemma}}}

\newcommand{\xdownarrow}[1]{
	{\left\downarrow\vbox to #1{}\right.\kern-\nulldelimiterspace}
}
\def\Z{{\mathbb{Z}^{n}_2}}
\textheight9in
\textwidth6.2in
\hoffset-0.6in
\voffset-0.6in
\begin{document}

\title{Crystallizations of small covers over the $n$-simplex $\Delta^n$ and the prism $\Delta^{n-1} \times I$}

\author{Anshu Agarwal$^1$ and Biplab Basak}
	
\date{February 10, 2026}
	
\maketitle
	
\vspace{-10mm}
\begin{center}
		
\noindent {\small Department of Mathematics, Indian Institute of Technology Delhi, New Delhi, 110016, India.$^2$}

\footnotetext[1]{Corresponding author}
		
\footnotetext[2]{{\em E-mail addresses:} \url{maz228084@maths.iitd.ac.in} (A. Agarwal), \url{biplab@iitd.ac.in} (B. Basak).}
		
\medskip
	
\end{center}
	
\hrule
	
\begin{abstract}
A crystallization of a PL manifold is an edge-colored graph that corresponds to a contracted triangulation of the manifold, facilitating the study of its topological and combinatorial properties. A small cover over a simple convex $n$-polytope $P^n$ is a closed $n$-manifold with a locally standard $\mathbb{Z}_2^n$-action such that its orbit space is homeomorphic to $P^n$. In this article, we study the crystallizations of small covers over the $n$-simplex $\Delta^n$ and the prism $\Delta^{n-1} \times I$. It is known that the small cover over the $n$-simplex $\Delta^n$ is $\mathbb{RP}^n$. For every $n\geq 2$, we prove that $\mathbb{RP}^n$ has a unique $2^n$-vertex crystallization. We also demonstrate that there are exactly $1 + 2^{n-1}$ D-J equivalence classes of small covers over the prism $\Delta^{n-1} \times I$, where $n\geq 3$. For each $\mathbb{Z}_2$-characteristic function of $\Delta^{n-1} \times I$, we construct a $2^{n-1}(n+1)$-vertex crystallization of the small cover $M^n(\lambda)$ with regular genus $1 + 2^{n-4}(n^2 - 2n - 3)$, where $n\geq 4$. The regular genus of closed PL \(n\)-manifolds extends the notions of the genus of surfaces and the Heegaard genus of 3-manifolds to higher dimensions. In this article, we construct four orientable and four non-orientable $\mathbb{RP}^3$-bundles over $\mathbb{S}^1$ up to D-J equivalence, each with regular genus $6$. Although the four orientable (resp. non-orientable) small covers are not D-J equivalent, we show that they are PL homeomorphic.
\end{abstract}

\noindent {\small {\em MSC 2020\,:} Primary 57Q15; Secondary 05C15, 57S25, 52B11, 52B70.
		
\noindent {\em Keywords:} $\mathbb{Z}_2^n$-action, Small cover, D-J equivalence,  Polytope, Crystallization, Regular genus.}
	
\medskip

\section{Introduction}
A crystallization of a PL manifold is a combinatorial representation using an edge-colored graph, providing an effective way to study its topological and combinatorial properties (cf. Subsection \ref{crystal}). This approach is particularly valuable as it allows for a discrete encoding of PL manifolds, facilitating computational and theoretical investigations. However, finding a crystallization for a given PL manifold is generally a challenging problem. In our article, we address this challenge by constructing crystallizations for a specific category of PL manifolds known as small covers. These manifolds arise from simple polytopes through characteristic functions, making them a rich class of spaces with combinatorial significance (cf. Subsection \ref{smallcover}). Leveraging this structure, we provide an explicit method to construct crystallizations for small covers, demonstrating how their combinatorial and topological properties can be studied effectively using this framework. Throughout this article, we work in the PL category unless stated otherwise, following the framework established in \cite{rs72}.

The concept of small covers, introduced by Davis and Januszkiewicz \cite{dj91}, has recently emerged as a captivating topic in toric topology. A closed manifold \( M^n \) is defined as a small cover if it supports a locally standard \(\mathbb{Z}_2^n\)-action, with its orbit space homeomorphic to a simple convex \( n \)-polytope \( P^n \). In the last three decades, extensive research has been conducted on small covers. Research on the topological types of 3-dimensional small covers is detailed in \cite{LY11}. Additionally, the enumeration of small covers over certain polytopes and the study of vector bundles over Davis-Januszkiewicz spaces are explored in \cite{C08, N10}. 
A small cover is a topological space closely linked to combinatorics through its relationship with simple polytopes. This space is constructed by assigning a characteristic function to the polytope's facets, encoding a \( \mathbb{Z}_2^n \)-action on the manifold. The study of small covers through crystallizations offers combinatorial techniques to simplify, visualize, and classify their topological properties, highlighting the rich interplay between combinatorial geometry and topology. A detailed study of crystallizations for 3-manifolds is available in \cite{BCG10, bd14,  CS08, KMN07}, while for 4-manifolds, it can be found in \cite{cc15}. 

In this article, we conduct a detailed study of the crystallizations of small covers over the \(n\)-simplex \(\Delta^n\) and the prism \(\Delta^{n-1} \times I\). It is a well-established fact that the small cover over the \(n\)-simplex \(\Delta^n\) is the real projective space \(\mathbb{RP}^n\) \cite{dj91}. It is also known that \(\mathbb{RP}^n\) admits a minimal crystallization consisting of \(2^n\) vertices. Firstly, we demonstrate that this crystallization is derived from our method of obtaining a colored graph from a small cover. We provide a proof that this \(2^n\)-vertex crystallization of \(\mathbb{RP}^n\) is unique, where $n\geq 2$ (cf. Theorem \ref{theorem:unique}).

Secondly, we investigate the small covers over the prism \(\Delta^{n-1} \times I\). We present a construction to obtain a gem of the small covers over the prism \(\Delta^{n-1} \times I\). We establish that there are precisely \(1 + 2^{n-1}\) Davis-Januszkiewicz (D-J) equivalence classes of these small covers, where $n\geq 3$ (cf. Lemma \ref{lemma:D-J}). For each \(\mathbb{Z}_2\)-characteristic function $\lambda$ of \(\Delta^{n-1} \times I\), we construct a \(2^{n-1}(n+1)\)-vertex crystallization of the corresponding small cover \(M^n(\lambda)\), where $n\geq 3$. We also compute the regular genus of these crystallizations, which is  \(1 + 2^{n-4}(n^2 - 2n - 3)\) for $n\ge 4$ (cf. Theorem \ref{main}).

In particular, we obtain crystallizations of small covers with regular genus 6 in the 4-dimensional case. The regular genus of closed \(n\)-manifolds extends the classical notions of the genus for surfaces and the Heegaard genus for 3-manifolds to higher dimensions. It serves as an important PL invariant. The classification of PL $n$-manifolds based on regular genus is a well-established problem in combinatorial topology. The topological classification of orientable PL $4$-manifolds up to regular genus $5$ is known (cf. \cite{cv99, cm93, s99}), but the classification remains open for regular genus 6 and higher. Some prime closed 4-manifolds with regular genus 6 can be found in  \cite{B19}. Here, we construct four orientable and four non-orientable \(\mathbb{RP}^3\)-bundles over \(\mathbb{S}^1\) up to D-J equivalence, each with regular genus 6 (cf. Corollary \ref{corollary:regular genus 6}). Although the four orientable (resp.\ non-orientable) small covers are not D-J equivalent, we show that they are PL homeomorphic (cf. Remark \ref{remark: PL isomorphic}).

Calculating the fundamental groups of small covers is an interesting problem (cf.~\cite{WY21}). Several algorithms exist to determine a presentation of the fundamental group of an $n$-manifold from its crystallizations (cf \cite{cc23, fgg86}). Since we obtain the crystallizations of small covers, presentations of their fundamental groups can be easily computed 
using these known algorithms.

\section{Preliminaries}\label{pre}

\subsection{Small Cover}\label{smallcover}
Davis and Januszkiewicz introduced the concept of small cover over a simple polytope in \cite{dj91}, which has since been studied extensively (see, for example, \cite{C08, LY11, N10, WY21}). As discussed in \cite[Remark, p. 421]{dj91}, throughout this article we consider all small covers and maps within the PL category. A {\it simple $n$-polytope} is a convex polytope such that exactly $n$ codimension-one faces meet at each vertex \cite{bp02}. For example, in platonic solids, a tetrahedron, cube, and dodecahedron are simple $3$-polytopes, while octahedron and icosahedron are not simple. Let $\rho$ be the standard action of $\Z$ on $\mathbb{R}^n$. A $\Z$ action $\eta$ on an $n$-dimensional manifold $M^n$ is called a {\it locally standard action} if for each $x\in M^n$, there exists an automorphism $\theta_x$ of $\Z$, a $\Z$-stable open neighborhood $U_x$ of $x$, and a $\Z$-stable open set $V_x$ in $\mathbb{R}^n$ such that $U_x$ and $V_x$ are $\theta_x$-equivariantly homeomorphic. That is, there is a homeomorphism $f_x:U_x\to V_x$ such that $$f_x(\eta(g,u))=\rho(\theta_x(g),f_x(u)).$$ Further, if the orbit space of this action $\eta$ is a simple convex $n$-polytope $P^n$, then we say that $M^n$ is a {\it small cover} over $P^n$. Therefore, we have a projection map $\pi: M^n \to P^n$ such that $\pi(x)$ is the orbit class of $x$ for all $x\in M^n$.

Given a simple $n$-polytope $P^n$, let $\mathcal F(P^n)$ denote the set of $(n-1)$-faces of $P^n$. A function $$\lambda:\mathcal{F}(P^n) \to \Z$$ is called a {\it $\mathbb Z_2$-characteristic function} if, for each vertex $v=\bigcap_{i=1}^{n} F_i,$ the vectors $\lambda(F_i),\ 1\le i \le n$, forms a basis of $\Z$, where $F_i\in \mathcal F (P^n)$. The vector $\lambda(F)$ is called the {\it $\mathbb{Z}_2$-characteristic vector} of $F,$ where $F\in \mathcal F (P^n).$ Let $G_F$ be the $l$-dimensional subspace generated by $\lambda(F_i),\ 1\le i\le l$, where $F=\bigcap_{i=1}^{l} F_i,\ F_i\in \mathcal F (P^n)$, a face of codimension-$l$. Define an equivalence relation on $\Z\times P^n$ as $$(g_1,p) \sim (g_2,p) \iff  \begin{cases}
g_1=g_2 & \text{if} \ p\in \text{int}(P^n)\\
g_1+g_2 \in G_{F_p} & \text{if} \ p\in \partial(P^n)
\end{cases},$$ where $F_p$ is the unique face containing $p$ in its relative interior. Let us denote the manifold $(\Z \times P^n)/ \sim $ by $M^n(\lambda)$. It is easy to check that the $\Z$-action $\eta$ on $M^n(\lambda)$ defined as $\eta(g,(g_1,p))=(g+g_1,p)$ is a locally standard action and its orbit space is $P^n$. Therefore, $M^n(\lambda)$ is a small cover over $P^n.$ 

 \begin{figure}[h!]
\tikzstyle{ver}=[]
\tikzstyle{edge} = [draw,thick,-]
    \centering
\begin{tikzpicture}[scale=0.6]
\foreach \x/\y/\z in
{-2.5/1.5/M_1^n,2.5/1.5/M_2^n,-2.5/-1.5/P^n,2.5/-1.5/P^n,0/2/f,0/-1/Id,-3/0/\pi_1,3/0/\pi_2}
{\node[ver] () at (\x,\y){$\z$};}
\draw [->](-1.8,1.5)--(1.8,1.5);
\draw [->](-2.5,1.1)--(-2.5,-1.1);
\draw [->](-1.8,-1.5)--(1.8,-1.5);
\draw [->](2.5,1.1)--(2.5,-1.1);
\end{tikzpicture}
\caption{Diagram depicting D-J equivalence of $M_1^n$ and $M_2^n$.}\label{fig:0}
\end{figure}

Let $M^n_1$ and $M^n_2$ be two small covers over $P^n$. The small covers $M^n_1$ and $M^n_2$ are called {\it D-J equivalent} if there exists a $\theta$-equivariant homeomorphism $f:M^n_1\to M^n_2$, covering the identity on $P^n$, where $\theta$ is an automorphism of $\Z$. In short, the diagram in Figure \ref{fig:0} commutes. It is evident that two small covers $M^n(\lambda_1)$ and $M^n(\lambda_2)$ are D-J equivalent if and only if there exists an automorphism $\theta$ of $\Z$ such that $\lambda_2=\theta \circ \lambda_1$. If $M^n$ is a small cover over $P^n$, then there exists a $\mathbb{Z}_2$-characteristic function $\lambda:\mathcal F(P^n)\to \Z$ such that $M^n(\lambda)$ and $M^n$ are equivariantly homeomorphic, covering the identity on $P^n$.

Let \( P^m \) and \( P^n \) be \( m \)- and \( n \)-polytopes, respectively, with \(\mathbb{Z}_2\)-characteristic functions \(\lambda_m: \mathcal{F}(P^m) \to \mathbb{Z}_2^m\) and \(\lambda_n: \mathcal{F}(P^n) \to \mathbb{Z}_2^n\). The set of \((m+n-1)\)-faces of \( P^m \times P^n \) is given by \(\mathcal{F}(P^m \times P^n) = \{F \times P^n, P^m \times F' \mid F \in \mathcal{F}(P^m), F' \in \mathcal{F}(P^n)\}\). Define a \(\mathbb{Z}_2\)-characteristic function \(\lambda: \mathcal{F}(P^m \times P^n) \to \mathbb{Z}_2^{m+n}\) by \(\lambda(F \times P^n) = (\lambda_m(F), \mathbf{0})\) and \(\lambda(P^m \times F') = (\mathbf{0}, \lambda_n(F'))\). 

\begin{proposition}[\cite{dj91}, 1.10] \label{Proposition: product}
Let \( P^m \) and \( P^n \) be \(m \)- and \( n \)-polytopes, respectively. Let $\lambda_m$ and $\lambda_n$ be the $\mathbb{Z}_2$-characteristic functions on $P^m$ and $P^n$, respectively. Let $\lambda$ be the $\mathbb{Z}_2$-characteristic function on $P^m \times P^n$ as defined above. Then the small cover $M^{m+n}(\lambda)$ over $P^m \times P^n$ is PL homeomorphic to $M^m(\lambda_m) \times M^n(\lambda_n)$.
\end{proposition}

\subsection{Crystallization} \label{crystal}
Suppose that $K$ is a finite collection of closed balls and write $|K| = \bigcup_{B\in K} B $. Then $K$ is called a simplicial cell complex if the following conditions hold.

\begin{enumerate}[$(i)$]
\item $|K|=$ $\bigsqcup_{B\in K}$ int$(B)$,

\item if $A,B\in K$, then $A\cap B$ is a union of balls of $K$,

\item  for each $h$-ball $A\in K$, the poset $\{B\in K \, | \, B \subset A\}$, ordered by inclusion, is isomorphic with the lattice of all faces of the standard $h$-simplex.
\end{enumerate}
\noindent A pseudo-triangulation of a polyhedron $P$ is a pair $(K,f)$, where $K$ is a simplicial cell complex and $f: |K| \to P$ is a homeomorphism (see \cite{fgg86} for more details). A maximal dimensional closed ball of $K$ is called a {\it facet}. If all the facets of $K$ are of the same dimension, then $K$ is called a {\it pure} simplicial cell complex. 

The crystallization theory provides a tool for representing piecewise-linear (PL) manifolds of any dimension combinatorially, using edge-colored graphs. Throughout the article, by a graph, we mean a multigraph with no loops. Let \(\Gamma = (V(\Gamma), E(\Gamma))\)  be an edge-colored graph, where the edges are colored (or labeled) using \(\Delta_n := \{0, 1, \dots, n\}\). The elements of the set \(\Delta_n\) are referred to as the {\it colors} of \(\Gamma\). The coloring of $\Gamma$ is called a \textit{proper edge-coloring} if any two adjacent edges in $\Gamma$ have different labels. In other words, for a proper edge-coloring, there exists a map $\gamma: E(\Gamma) \to \Delta_n$ such that  $\gamma(e_1) \ne \gamma(e_2)$ for any two adjacent edges $e_1$ and $e_2$. We denote a properly edge-colored graph as \((\Gamma,\gamma)\), or simply as \(\Gamma\) if the coloring is understood. If a graph $\Gamma$ is such that the degree of each vertex in the graph is $n+1$, then it is said to be {\it $(n+1)$-regular}.
We refer to \cite{bm08} for standard terminologies on graphs.

An {\it $(n+1)$-regular colored graph} is a pair $(\Gamma,\gamma)$, where $\Gamma$ is $(n+1)$-regular and $\gamma$ is a proper edge-coloring of $\Gamma$. For each $\mathcal{C} \subseteq \Delta_n$ with cardinality $k$, the graph $\Gamma_\mathcal{C} = (V(\Gamma), \gamma^{-1}(\mathcal{C}))$ is a $k$-regular colored graph with edge-coloring $\gamma|_{\gamma^{-1}(\mathcal{C})}$. For a color set $\{j_1,j_2,\dots,j_k\} \subset \Delta_n$, $g(\Gamma_{\{j_1,j_2, \dots, j_k\}})$ or $g_{\{j_1, j_2, \dots, j_k\}}$ denotes the number of connected components of the graph $\Gamma_{\{j_1, j_2, \dots, j_k\}}$. A graph $(\Gamma,\gamma)$ is called {\it contracted} if the subgraph $\Gamma_{\hat{j}} = \Gamma_{\Delta_n\setminus \{j\}}$ is connected, i.e., $g_{\hat{j}}=1$ for all $j \in \Delta_n$.
 
For a properly edge-colored graph $(\Gamma,\gamma)$ with the color set $\Delta_n$, a corresponding $n$-dimensional simplicial cell complex ${\mathcal K}(\Gamma)$ is constructed as follows:

\begin{itemize}
\item{} For each vertex $v\in V(\Gamma)$, take an $n$-simplex $\sigma(v)$ with vertices labeled by $\Delta_n$.

\item{} Corresponding to each edge of color $j$ between $v_1,v_2\in V(\Gamma)$, identify the ($n-1$)-faces of $\sigma(v_1)$ and $\sigma(v_2)$ opposite to the $j$-labeled vertices such that the vertices with the same labels coincide.
\end{itemize}

The simplicial cell complex \(\mathcal{K}(\Gamma)\) is \((n+1)\)-colorable, i.e., its 1-skeleton admits a proper vertex-coloring by \(\Delta_n\). Note that all colors of \(\Delta_n\) need not appear in the edge-coloring of \((\Gamma,\gamma)\). The topological space \( |\mathcal{K}(\Gamma)| \) inherits a natural PL structure, and the graph \((\Gamma,\gamma)\) is said to represent \( |\mathcal{K}(\Gamma)| \). In this article, we primarily consider \((\Gamma,\gamma)\) as an \((n+1)\)-regular colored graph. In that case, in \(\mathcal{K}(\Gamma)\) every \((n-1)\)-simplex is contained in exactly two \(n\)-simplices.
If \( |\mathcal{K}(\Gamma)| \) is homeomorphic to an \( n \)-manifold \( M \), then \((\Gamma, \gamma)\) is referred to as a \textit{gem (graph encoded manifold)} of \( M \), or, simply, it is said that \((\Gamma, \gamma)\) represents \( M \). In this context, \(\mathcal{K} (\Gamma)\) is described as a \textit{colored triangulation} of \( M \). 
The {\it disjoint star} of \(\sigma \in \mathcal{K}(\Gamma)\) is a simplicial cell complex that consists of all the \(n\)-simplices of \(\mathcal{K}(\Gamma)\) that contain \(\sigma\), with re-identification of only their \((n-1)\)-faces containing \(\sigma\) as in \(\mathcal{K}(\Gamma)\). The {\it disjoint link} of \(\sigma \in \mathcal{K}(\Gamma)\) is the subcomplex of its disjoint star generated by the simplices that do not intersect $\sigma$.

From the construction above, it can be easily seen that for any subset \(\mathcal{C} \subset \Delta_n\) with cardinality \(k+1\), \(\mathcal{K}(\Gamma)\) has as many \( k \)-simplices with vertices labeled by \(\mathcal{C}\) as there are connected components of \(\Gamma_{\Delta_n \setminus \mathcal{C}}\) \cite{fgg86}. Specifically, each component of the \((n-k)\)-regular colored subgraph induced by the colors from \(\Delta_n \setminus \mathcal{C}\) corresponds to the disjoint link of a \( k \)-simplex with vertices labeled by \(\mathcal{C}\). For further information on CW complexes and related concepts, refer to \cite{bj84}. An $(n+1)$-regular colored gem $(\Gamma,\gamma)$ of a closed manifold $M$ is called a {\em crystallization} of $M$ if it is contracted. In other words, the corresponding simplicial cell complex \( \mathcal{K}(\Gamma)\) has exactly $(n+1)$ vertices.

If \( K \) is a colored triangulation of an \( n \)-manifold \( M \), meaning that \( K \) is an \( (n+1) \)-colorable simplicial cell complex and \( |K| \) is homeomorphic to \( M \), then by reversing the steps of the above construction, we obtain a gem \( (\Gamma,\gamma) \) of \( M \). Clearly, \( \mathcal{K}(\Gamma)=K\). 
Every closed PL \(n\)-manifold \(M\) is known to admit a gem. From a gem, a crystallization of \(M\) can be easily obtained through certain combinatorial moves (see \cite{ fg82i, fgg86} for more details). Furthermore, it is well established in the literature that a gem of a closed manifold \(M\) is bipartite if and only if \(M\) is orientable.

Let $(\Gamma,\gamma)$ be an $(n+1)$-regular colored graph representing a closed manifold $M$. Let $\Lambda_1\subset V(\Gamma)$ and $\Lambda_2 \subset V(\Gamma)$ be such that the subgraphs $A_1$ and $A_2$ generated by $\Lambda_1$ and $\Lambda_2$, respectively, represent $n$-dimensional balls. Let there be an isomorphism $\Phi:A_1 \to A_2$ such that $u$ and $\Phi(u)$ are joined by an edge of color $i$ for each $u\in \Lambda_1$, and $\Lambda_1$ and  $\Lambda_2$ lie in different components of $\Gamma_{\hat{i}}$. Consider a new  $(n+1)$-colored graph  $\Gamma^\prime$ obtained from $\Gamma$ as follows. Let $V(\Gamma^\prime)=V(\Gamma)\setminus (\Lambda_1 \cup \Lambda_2)$. For two vertices $p$ and $q$ in $V(\Gamma^\prime)$, if $p$ and $q$ are connected to $u$ and $\Phi(u)$, respectively, by an edge of color $j\in \Delta_n \setminus  \{i\}$ in $\Gamma$ where $u\in \Lambda_1$, then $p$ and $q$ are joined by an edge of color $j$ in $\Gamma^\prime$. On the other hand, if $p$ and $q$ are joined by an edge of color $j\in \Delta_n$ in $\Gamma$, then $p$ and $q$ are joined by an edge of color $j$ in $\Gamma^\prime$. The process to obtain $\Gamma^\prime$ from $\Gamma$ is called a {\it polyhedral glue move} with respect to $(\Phi,\Lambda_1,\Lambda_2,i)$. From \cite{fg82i}, it is known that $\Gamma^\prime$ also represents $M$. If $\Lambda_1$ and $ \Lambda_2$ are singleton sets, then this polyhedral glue move is called a {\it simple glue move} or {\it cancellation of $1$-dipole}, where $\Lambda_1$ and $\Lambda_2$ forms $1$-dipole with respect to the color $i$. For more details, one can see \cite{fg82i}.

Let $(\Gamma,\gamma)$ and $(\bar{\Gamma},\bar{\gamma})$ be two  $(n+1)$-regular colored graphs with color sets $\Delta_n$ and $\bar{\Delta}_n$, respectively. Then $I:=(I_V,I_c):\Gamma \to \bar{\Gamma}$ is called an {\em isomorphism} if $I_V: V(\Gamma) \to V(\bar{\Gamma})$ and $I_c:\Delta_n \to \bar{\Delta}_n$ are bijective maps such that $uv$ is an edge of color $i \in \Delta_n$ if and only if $I_V(u)I_V(v)$ is an edge of color $I_c(i) \in \bar{\Delta}_n$. The graphs $(\Gamma, \gamma)$ and $(\bar{\Gamma}, \bar{\gamma})$ are then said to be {\it isomorphic}.

\subsection{Regular Genus of closed PL $n$-manifolds}\label{sec:genus}
The regular genus of closed \(n\)-manifolds extends the notions of the genus of surfaces and the Heegaard genus of 3-manifolds to higher dimensions. For a closed connected surface, its regular genus is simply its genus. However, for
closed connected $n$-manifolds ($n \geq 3$), the regular genus is defined as follows.
From \cite{fg82, g81}, it is known that if $(\Gamma,\gamma)$ is a bipartite (resp. non-bipartite) $(n+1)$-regular colored graph that represents a closed connected orientable (resp. non-orientable) $n$-manifold $M$, then for each cyclic permutation $\varepsilon=(\varepsilon_0,\dots,\varepsilon_n)$ of $\Delta_n$, there exists a regular embedding of $\Gamma$ into an orientable (resp. non-orientable) surface $S$. A {\it regular embedding} is an embedding where each region is bounded by a bi-colored cycle with colors $\varepsilon_i,\varepsilon_{i+1}$ for some $i$ (addition is modulo $n + 1$). Moreover, the Euler characteristic $\chi_\varepsilon(\Gamma)$ of the orientable (resp. non-orientable) surface  $S$ satisfies
$$\chi_\varepsilon(\Gamma)=\sum_{i \in \mathbb{Z}_{n+1}}g_{\varepsilon_i\varepsilon_{i+1}} + (1-n)\frac{\text{card}(V(\Gamma))}{2},$$ 
and the genus (resp. half the genus) $\rho_ \varepsilon$ of $S$ satisfies
$$\rho_ \varepsilon(\Gamma)=1-\frac{\chi_\varepsilon(\Gamma)}{2}.$$
The regular genus $\rho(\Gamma)$ of $(\Gamma,\gamma)$ is defined as
$$\rho(\Gamma)= \min \{\rho_{\varepsilon}(\Gamma) \ | \  \varepsilon \ \mbox{ is a cyclic permutation of } \ \Delta_n\}.$$
The regular genus of $M$ is defined as 
$$\mathcal G(M) = \min \{\rho(\Gamma) \ | \  (\Gamma,\gamma) \mbox{ represents } M\}.$$
	
The regular genus is a PL invariant. Some studies on the regular genus of 3-manifolds can be found in \cite{BCG10, CS08, KMN07}. A closed manifold of dimension \( n \) with regular genus \( 0 \) is characterized as \(\mathbb{S}^n\) \cite{fg82}. The following result gives a lower bound for the regular genus of a closed connected $4$-manifold.

\begin{proposition}[\cite{bc17}]\label{lbrg}
  Let $M$ be a closed connected PL $4$-manifold with $rk(\pi_1(M))=m$. Then $\mathcal G(M)\ge 2\chi(M)+5m-4$.
\end{proposition}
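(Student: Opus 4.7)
The plan is to work with a minimizing gem $(\Gamma,\gamma)$ of $M$ with $2p$ vertices, together with a cyclic permutation $\varepsilon=(\varepsilon_0,\ldots,\varepsilon_4)$ of $\Delta_4$ attaining $\rho_\varepsilon(\Gamma)=\mathcal G(M)$, and to express the two sides of the inequality combinatorially in terms of the component counts $g_D$ of the color-restricted subgraphs $\Gamma_D$. The bridge between the two resulting expressions is supplied by the fundamental-group algorithm recalled in Subsection \ref{crystal}.

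First, from Subsection \ref{sec:genus} one has
$$2\mathcal G(M)=2-\chi_\varepsilon(\Gamma)=2+3p-\sum_{i\in\mathbb Z_5}g_{\varepsilon_i\varepsilon_{i+1}},$$
while the identification of $k$-simplices of $\mathcal K(\Gamma)$ labeled by $\mathcal C\subset\Delta_4$ with components of $\Gamma_{\Delta_4\setminus\mathcal C}$ yields (after using $g_{\{i\}}=p$ and counting $2p$ vertices of $\Gamma$ as top-dimensional simplices)
$$\chi(M)=\sum_{|D|=4}g_D\,-\,\sum_{|D|=3}g_D\,+\,\sum_{|D|=2}g_D\,-\,3p.$$
Second, the algorithm in Subsection \ref{crystal} produces, for any fixed color $i\in\Delta_4$, a presentation of $\pi_1(M)$ with $p$ generators (one per $i$-colored edge) and $|\overline{R_i}|=g_{\hat i}-1$ trivial relations $x_{g_\ell}=1$, leaving $p-g_{\hat i}+1$ effective generators. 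Since a group of rank $m$ cannot be presented on fewer than $m$ generators, this gives
$$g_{\hat i}\;\leq\; p-m+1 \quad\text{for every color } i\in\Delta_4,$$
which is what will force the $5m$ correction term into the right-hand side once it is substituted into the expression for $\chi(M)$ above.

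The principal obstacle lies in the third and final step: closing the gap between the two alternating sums. The formula for $\mathcal G(M)$ sees only the five \emph{consecutive} pairs $\{\varepsilon_i,\varepsilon_{i+1}\}$ prescribed by $\varepsilon$, whereas $\chi(M)$ is symmetric in all $\binom{5}{2}=10$ unordered pairs and additionally contains the triple sum $\sum_{|D|=3}g_D$. To finish, one needs either an averaging argument over the cyclic permutations of $\Delta_4$ (exploiting that $\mathcal G(M)$ is realized as a minimum, so that every unordered pair contributes equally often as a consecutive pair) or the elementary monotonicity $g_{D'}\geq g_D$ whenever $D'\subset D$, in order to bound the non-consecutive pair counts and the triple counts in terms of the consecutive ones and the $g_{\hat i}$'s. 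The exact multiplicative constant $5$ and the additive constant $-4$ in the target inequality should then drop out of a careful tally when the presentation bound $g_{\hat i}\leq p-m+1$ is applied at an optimally chosen color rather than summed over all five.
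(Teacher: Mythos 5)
The paper offers no proof of Proposition \ref{lbrg}: it is imported verbatim from \cite{bc17}, so the only comparison available is with the argument there. Your proposal sets up the correct bookkeeping (the expressions for $2\rho_\varepsilon(\Gamma)$ and for $\chi(M)$ in terms of the component counts $g_D$ are right), but it is not a proof, and the mechanism you propose for injecting $m$ is the wrong one. The inequality $g_{\hat{i}}\le p-m+1$ is vacuous exactly where you need it: for a crystallization $g_{\hat{i}}=1$ for every $i$, so it collapses to $m\le p$; and since $\sum_{|D|=4}g_D$ enters $\chi(M)$ with a positive sign, an upper bound on it cannot supply the $+5m$ penalty on the right-hand side. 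The inequality that actually drives the bound in \cite{bc17} is $g_D\ge m+1$ for three-element color sets $D=\{\varepsilon_i,\varepsilon_{i+1},\varepsilon_{i+2}\}$ --- equivalently, any two of the five vertices of $\mathcal{K}(\Gamma)$ are joined by at least $m+1$ edges, because $\pi_1(M)$ is generated by $g_{\Delta_4\setminus\{c,d\}}-1$ loops through the vertices colored $c$ and $d$. This is precisely the quantity the present paper singles out when it recalls, after Corollary \ref{corollary:regular genus 6}, that equality in Proposition \ref{lbrg} is characterized by $g_{\{\varepsilon_i,\varepsilon_{i+1},\varepsilon_{i+2}\}}=m+1$ for all $i$ (weak semi-simplicity).

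The step you defer is the theorem. Neither of your suggested fixes closes it: averaging over cyclic permutations bounds the minimum $\mathcal{G}(M)=\min_\varepsilon\rho_\varepsilon(\Gamma)$ from \emph{above} by $1+\tfrac{3p}{2}-\tfrac{1}{4}\sum_{|D|=2}g_D$, which is the wrong direction for a lower bound on $\mathcal{G}(M)$; and the monotonicity $g_{D'}\ge g_D$ for $D'\subset D$, fed into your two identities, does not by itself produce the constants $5$ and $-4$ (a crude tally leaves the diagonal-pair counts $g_{\varepsilon_k\varepsilon_{k+2}}$ and the ten triple counts uncontrolled). The argument in \cite{bc17} genuinely requires more structure, namely the classical relations between $\rho_\varepsilon(\Gamma)$ and the regular genera of the $4$-colored subgraphs $\Gamma_{\hat{\varepsilon_i}}$ (each representing $\mathbb{S}^3$) and of the $3$-colored subgraphs on consecutive triples, applied to a minimizing crystallization; you would also need the standard reduction from an arbitrary minimizing gem to a crystallization, since your Euler-characteristic formula tacitly uses $\sum_{|D|=4}g_D=5$. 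As written, the proposal records correct preliminary identities and an honest description of the missing step, but the missing step is where the theorem lives.
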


\section{Main Results}

\subsection{Uniqueness of $2^n$-vertex crystallization of $\mathbb{RP}^n$}\label{3.1}
Let $P^n=[v_0,v_1,\dots,v_n]$ be an $n$-simplex, where $n\ge 2$, and let $\mathcal F=\{F_i=[v_0,\dots,v_{i-1},v_{i+1},\dots,$ $v_n]\ |\ 0\le i\le n\}$ denote the set of $(n-1)$-faces of $P^n$. Let $\lambda:\mathcal F\to \Z$ be a $\mathbb{Z}_2$-characteristic function, and let $b_i$ denote the $\mathbb{Z}_2$-characteristic vector of $F_i$ for all $0\le i\le n$. Considering the vertex $v_0=\bigcap_{j=1}^n F_j$, we have $\{b_i\ |\ 1\le i\le n\}$ as a basis of $\Z$. Since it is evident that any $n$ $\mathbb{Z}_2$-characteristic vectors are linearly independent, we get that $b_0=\sum_{j=1}^n b_j$. Fix an order of the elements of $\Z$ and let $g_i$ denote the $i^{th}$ element of $\Z$. Now, let us denote $g_i\times P^n$ by $t_i$ for all $1\le i\le 2^n$. By the construction of $M^n(\lambda)=\Z\times P^n/\sim$, the faces $F_j^k$ and $F_j^l$ of $t_k$ and $t_l$, respectively, are identified if and only if $g_k+g_l=b_j$ for $0\le j\le n$ and $1\le k,l\le 2^n$. Since $t_i$ is the $n$-simplex $[v_0^i,v_1^i,\dots,v_n^i]$, we color its vertices as $0,1,\dots,n$ in order, for all $1\le i\le 2^n$. Therefore, $M^n(\lambda)$ is an $(n+1)$-colorable simplicial cell complex, and hence, it can be represented by an $(n+1)$-regular colored graph $(\Gamma,\gamma)$ (cf. subsection \ref{crystal}). Clearly, this $(n+1)$-colorable simplicial cell complex has exactly $n+1$ vertices. It is known that the small cover $M^n(\lambda)$ over $\Delta_n$ is $\mathbb{RP}^n$ (cf. \cite{dj91}). Therefore, we get a crystallization ($\Gamma,\gamma$) of $\mathbb{RP}^n$ with $2^n$ vertices. This $2^n$-vertex crystallization of $\mathbb{RP}^n$ is known in the literature (cf. \cite{cs07}). In this section, we establish the uniqueness up to isomorphism of the crystallization of \( \mathbb{RP}^n \) with \( 2^n \) vertices for every \( n \geq 2 \).

\begin{proposition}[\cite{cs07}]\label{min}
     For $n \geq 2$, the number of vertices in a crystallization of $\mathbb{RP}^n$ is at least $2^n$.
\end{proposition}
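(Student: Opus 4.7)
The plan is to prove the lower bound $V\geq 2^n$ by lifting the crystallization to the universal cover $S^n$ and exploiting the resulting $\mathbb{Z}_2$-symmetry, combined with an inductive analysis on $n$. Suppose $(\Gamma,\gamma)$ is a crystallization of $\mathbb{RP}^n$ with $V$ vertices. Since $\pi_1(\mathbb{RP}^n)=\mathbb{Z}_2$ and the universal cover is $S^n$, the colored simplicial cell complex $\mathcal{K}(\Gamma)$ has a two-sheeted cover $\widetilde{\mathcal{K}}$ triangulating $S^n$, and correspondingly a gem $(\widetilde{\Gamma},\widetilde{\gamma})$ of $S^n$ with $2V$ vertices that carries a free, color-preserving $\mathbb{Z}_2$-action whose quotient recovers $(\Gamma,\gamma)$. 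Since $\Gamma$ is contracted, each $\widetilde{\Gamma}_{\hat{i}}$ has exactly two connected components, interchanged by the deck transformation, and more generally $g_{\mathcal{C}}(\widetilde{\Gamma})\in\{g_{\mathcal{C}}(\Gamma),\,2\,g_{\mathcal{C}}(\Gamma)\}$ for every $\mathcal{C}\subsetneq\Delta_n$.

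The base case $n=2$ is immediate from face counting: the $3$-regular structure forces $f_1=3V/2$ and $f_2=V$, so $\chi(\mathbb{RP}^2)=3-3V/2+V=1$ yields $V=4=2^2$. For the inductive step, I would analyze the disjoint links of $0$-simplices in $\widetilde{\mathcal{K}}$, each of which corresponds to a component of $\widetilde{\Gamma}_{\hat{i}}$ and represents $S^{n-1}$. The aim is to use the fact that the $\mathbb{Z}_2$-action pairs these links up in a color-consistent way, so that downstairs in $\Gamma$ one recovers an $(n-1)$-dimensional colored configuration to which the inductive bound can be applied — essentially a crystallization of $\mathbb{RP}^{n-1}$ carved out of $\Gamma_{\hat{i}}$ after identifying paired links. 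If the link-structure inductive step cannot be pushed through directly, an alternative is a Gagliardi-type counting argument: express $V$ in terms of the connectivity numbers $g_{\{i,j\}}$ via the Euler characteristic of $\mathbb{RP}^n$, then bound these numbers from below using the nontriviality of $H_{\ast}(\mathbb{RP}^n;\mathbb{Z}_2)$, whose total Betti number is $n+1$.

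The main obstacle is precisely this reduction step, since the disjoint link of a single vertex in a crystallization of $\mathbb{RP}^n$ is $S^{n-1}$ rather than $\mathbb{RP}^{n-1}$, so one cannot naively apply the inductive hypothesis to a link. The trick will be to look not at the link of a single colored vertex but at a \emph{paired} object — a union of two links exchanged by the $\mathbb{Z}_2$-action on the cover — and to show that its induced colored structure, after quotienting, carries the data of a crystallization of $\mathbb{RP}^{n-1}$. I expect that making this correspondence rigorous, and tracking how the connectivity numbers $g_{\mathcal{C}}$ transform under the quotient, will be the technical heart of the proof, with the exponential lower bound $2^n$ emerging as the minimum configuration compatible with both the $\mathbb{Z}_2$-equivariance and the contractedness of $\Gamma$.
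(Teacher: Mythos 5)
This proposition is quoted by the paper from \cite{cs07} and is not proved there, so the only internal point of comparison is the spine argument in the proof of Lemma \ref{i-simplex}; your outline gestures at the same circle of ideas but does not close the one step that actually carries the proof. Your base case $n=2$ is fine, and the passage to the $2V$-vertex gem of $\mathbb{S}^n$ with a free color-preserving $\mathbb{Z}_2$-action is legitimate. The gap is the inductive step, which you explicitly defer (``the aim is to use\dots'', ``I expect that making this correspondence rigorous\dots will be the technical heart''). Worse, the specific device you propose does not produce the object you need: the two disjoint links of the lifts $\tilde v_j^{(1)},\tilde v_j^{(2)}$ in $\widetilde{\mathcal K}$ are interchanged by the deck transformation, so their union modulo the $\mathbb{Z}_2$-action is just the single disjoint link of $v_j$ downstairs, i.e.\ an $\mathbb{S}^{n-1}$ again --- not anything carrying the topology of $\mathbb{RP}^{n-1}$. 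The complex that is homotopy equivalent to $\mathbb{RP}^{n-1}$ is the spine $L$, the subcomplex of $\mathcal K(\Gamma)$ spanned by the vertices of colors $\Delta_n\setminus\{j\}$, obtained from the $V$-facet sphere $\partial(\mathrm{st}(v_j))$ by identifying its $(n-1)$-simplices in pairs (one pair per $j$-colored edge of $\Gamma$), hence with $V/2$ facets. If $L$ were a contracted colored triangulation of $\mathbb{RP}^{n-1}$ one would get $V/2\ge 2^{n-1}$ and be done; but the pairwise identification need not come from a free involution, so $L$ is a priori only a pseudocomplex homotopy equivalent to $\mathbb{RP}^{n-1}$, and the inductive hypothesis (a statement about crystallizations of the manifold $\mathbb{RP}^{n-1}$) does not apply to it without further argument. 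That bridge is exactly what \cite{cs07} supplies and what your proposal leaves open.

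Your fallback --- bounding $V$ through the $g_{\{i,j\}}$ and the Euler characteristic, using that the total $\mathbb{Z}_2$-Betti number of $\mathbb{RP}^n$ is $n+1$ --- cannot work as stated either: $\chi(\mathbb{RP}^n)\in\{0,1\}$ and the homological data you invoke grow linearly in $n$, whereas the bound to be proved is exponential. Any counting argument of this type must exploit more than ranks (e.g.\ the cup-product structure, or an iterated link/spine descent), so as written neither branch of your plan reaches $2^n$.
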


\begin{proposition}[\cite{cs07}]\label{length}
For $n \geq 2$, if $(\Gamma,\gamma)$ is a crystallization of $\mathbb{RP}^n$  with $2^n$ vertices, then the length of every bi-colored cycle of $\Gamma$ is at least $4$.
\end{proposition}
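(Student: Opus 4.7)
The plan is a proof by contradiction combining a dipole-cancellation argument with the minimality bound of Proposition \ref{min}. Suppose $(\Gamma,\gamma)$ is a $2^n$-vertex crystallization of $\mathbb{RP}^n$ containing a bi-colored cycle of length $2$; since bi-colored cycles in a properly edge-colored graph always have even length, length $2$ is the only sub-$4$ option to rule out. Such a cycle is a pair of vertices $u,v$ joined by parallel edges in two distinct colors, say $i$ and $j$.

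The first step is to show that $\{u,v\}$ constitutes a \emph{proper} $2$-dipole, i.e.\ that $u$ and $v$ lie in distinct connected components of $\Gamma_{\Delta_n \setminus \{i,j\}}$. By the standard surgery interpretation of non-proper $h$-dipoles in the gem literature (cf.\ \cite{fg82i}), cancelling a non-proper $2$-dipole in a gem of an $n$-manifold $M$ realises $M$ as a non-trivial PL connected sum of the form $M^\prime \# (\mathbb{S}^1 \times \mathbb{S}^{n-1})$ (with the appropriate orientability twist). Since $\mathbb{RP}^n$ is PL-prime for every $n \geq 2$---any such decomposition would, by comparison of fundamental groups and Euler characteristics, be impossible---any length-$2$ bi-colored cycle in $\Gamma$ must in fact be a proper $2$-dipole.

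Next, I would perform the proper $2$-dipole cancellation: delete $u$, $v$, and the two parallel edges between them, and for each color $c \in \Delta_n \setminus \{i,j\}$ splice the $c$-colored dangling edge formerly incident to $u$ with the one formerly incident to $v$. The resulting $(n+1)$-regular edge-colored graph $\Gamma^\prime$ is a gem of $\mathbb{RP}^n$ on $2^n - 2$ vertices. If $\Gamma^\prime$ is not contracted, then some $\Gamma^\prime_{\hat{k}}$ is disconnected; because $\Gamma^\prime$ itself is connected, some $k$-colored edge must bridge two components of $\Gamma^\prime_{\hat{k}}$, producing a cancellable $1$-dipole whose cancellation further reduces the vertex count by $2$ while preserving $\mathbb{RP}^n$. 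Iterating this reduction terminates at a crystallization of $\mathbb{RP}^n$ with at most $2^n - 2$ vertices, in direct contradiction with Proposition \ref{min}.

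The main obstacle is step $1$: rigorously promoting an arbitrary length-$2$ bi-colored cycle to a proper $2$-dipole. Both the surgery interpretation of non-proper dipoles and PL-primeness of $\mathbb{RP}^n$ are classical, but they must be combined with care---indeed, the minimal $2$-vertex crystallization of $\mathbb{S}^n$ has bi-colored cycles of length $2$ which are all non-proper, showing that the primeness hypothesis is essential and that the statement genuinely fails for non-prime candidates. A purely combinatorial alternative would combine the identity $f_{n-2} = \sum_{\{i,j\}} g_{\{i,j\}}$, the Euler relation $\chi(\mathbb{RP}^n) = \sum_k (-1)^k f_k$, and the face counts $f_0 = n+1$, $f_{n-1} = (n+1)2^{n-1}$, $f_n = 2^n$, but controlling the intermediate face numbers $f_1,\ldots,f_{n-3}$ uniformly in $n$ appears to require additional induction and case analysis.
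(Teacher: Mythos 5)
The paper gives no proof of this proposition at all --- it is imported from \cite{cs07} --- so your argument has to stand entirely on its own. Its second half does: if the two vertices $u,v$ of a length-$2$ bi-colored cycle really form a \emph{proper} $2$-dipole, then cancelling it yields a gem of $\mathbb{RP}^n$ with $2^n-2$ vertices, and the standard $1$-dipole reduction (your bridging argument for why a non-contracted gem always contains a cancellable $1$-dipole is correct) produces a crystallization with at most $2^n-2$ vertices, contradicting Proposition \ref{min}. The gap is exactly where you place ``the main obstacle'': promoting an arbitrary length-$2$ cycle to a proper $2$-dipole.

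The lemma you invoke for that step --- that cancelling a non-proper $2$-dipole realises $M$ as $M'\#(\mathbb{S}^1\times\mathbb{S}^{n-1})$ (possibly twisted) --- is not in \cite{fg82i}, which only shows that cancelling \emph{proper} dipoles preserves the represented manifold, and you neither prove it nor give a precise reference. Your own sanity check in fact refutes it as stated: the $2$-vertex crystallization of $\mathbb{S}^n$ has non-proper bi-colored $2$-cycles, yet $\mathbb{S}^n$ admits no $\mathbb{S}^{n-1}$-bundle-over-$\mathbb{S}^1$ summand (compare fundamental groups), so primeness cannot be what rescues the lemma; at best it could hold under the additional hypothesis that $u$ and $v$ are joined by \emph{exactly} two edges. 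But then you must separately treat the case in which $u$ and $v$ are joined by $h$ edges for some $3\le h\le n$ --- nothing you have said excludes this in a $2^n$-vertex crystallization of $\mathbb{RP}^n$, the $2$-dipole splice is not even well defined there, and the non-proper sub-case for each such $h$ would again need its own (unproved) surgery description. Since a non-proper configuration cannot be deleted without changing the represented manifold, the contradiction with Proposition \ref{min} never gets off the ground in that branch, and the combinatorial fallback you sketch (face counts plus the Euler relation) is explicitly left incomplete. As it stands, the argument only disposes of the proper case.
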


\begin{lemma}\label{i-simplex}
    The number of $i$-simplices, $1\leq i < n$, colored by $\{k_0,k_1,\dots,k_i\}\subset \Delta_n$ in a contracted triangulation of $\mathbb{RP}^n$ is at least $2^i$. 
\end{lemma}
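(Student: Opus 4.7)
By the standard correspondence between an $(n+1)$-regular colored graph $(\Gamma,\gamma)$ and the simplicial cell complex $\mathcal{K}(\Gamma)$, the number of $i$-simplices of $\mathcal{K}(\Gamma)$ with vertex color set $\mathcal{C} = \{k_0, k_1, \dots, k_i\}$ is exactly $g_{\Delta_n \setminus \mathcal{C}}$, the number of connected components of the subgraph $\Gamma_{\Delta_n \setminus \mathcal{C}}$. So the task reduces to showing $g_{\Delta_n \setminus \mathcal{C}} \geq 2^i$ for every crystallization $(\Gamma,\gamma)$ of $\mathbb{RP}^n$ and every $\mathcal{C}\subset \Delta_n$ with $|\mathcal{C}| = i+1$, whenever $1 \leq i < n$.

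The starting point is the top case $i = n-1$: here $\Delta_n \setminus \mathcal{C}$ is a single color $c$, so $\Gamma_{\{c\}}$ is a perfect matching on $V(\Gamma)$ and $g_{\{c\}} = |V(\Gamma)|/2$. Proposition \ref{min} gives $|V(\Gamma)| \geq 2^n$, hence $g_{\{c\}} \geq 2^{n-1}$, matching the bound $2^i$ exactly. This settles the extremal case and also suggests the proof strategy in general: couple the Proposition \ref{min} lower bound on $|V(\Gamma)|$ with an upper bound on the size of each connected component of $\Gamma_{\Delta_n \setminus \mathcal{C}}$.

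For the remaining range $1 \leq i \leq n-2$, my first attempt would be the following counting argument. Each connected component $C$ of $\Gamma_{\Delta_n \setminus \mathcal{C}}$ is an $(n-i)$-regular colored graph representing the disjoint link of the corresponding $i$-simplex of $\mathcal{K}(\Gamma)$, which must be a PL $(n-i-1)$-sphere because $\mathbb{RP}^n$ is a closed manifold. Summing vertex counts across components yields $|V(\Gamma)| = \sum_C |V(C)|$, so if one can show $|V(C)| \leq 2^{n-i}$ for every such component $C$, then by Proposition \ref{min}
\[
g_{\Delta_n \setminus \mathcal{C}} \;\geq\; \frac{|V(\Gamma)|}{2^{n-i}} \;\geq\; \frac{2^n}{2^{n-i}} \;=\; 2^i,
\]
which is exactly what we want. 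Note that equality occurs precisely in the $2^n$-vertex small-cover crystallization constructed in the paper, where the disjoint link of each $i$-simplex is a gem of $\mathbb{S}^{n-i-1}$ with $2^{n-i}$ vertices, reflecting the $2^{n-i}$ copies of the corresponding codimension-$(n-i)$ face of $\Delta^n$ in $M^n(\lambda)$.

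The main obstacle, and the technical heart of the proof, is justifying the uniform bound $|V(C)| \leq 2^{n-i}$ on each disjoint-link component; a priori a gem of $\mathbb{S}^{n-i-1}$ can have arbitrarily many vertices, so the bound must be extracted from the global constraint that $\mathcal{K}(\Gamma)$ triangulates $\mathbb{RP}^n$ rather than some other $n$-manifold. I would approach this via the double cover $p \colon \mathbb{S}^n \to \mathbb{RP}^n$: the crystallization $(\Gamma,\gamma)$ lifts to a gem $(\widetilde{\Gamma}, \widetilde{\gamma})$ of $\mathbb{S}^n$ on $2|V(\Gamma)|$ vertices carrying a free $\mathbb{Z}_2$-deck transformation, under which each component $C$ of $\Gamma_{\Delta_n \setminus \mathcal{C}}$ lifts to either one or two components of $\widetilde{\Gamma}_{\Delta_n \setminus \mathcal{C}}$, and the equivariant structure forces the claimed size bound. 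An alternative route, which I would pursue in parallel, is to study the subcomplex $K_{\mathcal{C}} \subseteq \mathcal{K}(\Gamma)$ generated by all simplices whose color labels lie in $\mathcal{C}$: if one can show $K_{\mathcal{C}}$ is itself a contracted triangulation of a closed $i$-manifold that must be $\mathbb{RP}^i$, then applying Proposition \ref{min} to $K_{\mathcal{C}}$ immediately gives at least $2^i$ top-dimensional simplices, recovering the bound by an induction on dimension. Verifying the manifold structure of $K_{\mathcal{C}}$ (equivalently, that each $(i-1)$-face of $K_{\mathcal{C}}$ is contained in exactly two $i$-faces of $K_{\mathcal{C}}$) is the delicate step here.
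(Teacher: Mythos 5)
Your reduction to showing $g_{\Delta_n\setminus\mathcal{C}}\ge 2^i$ and your treatment of the extremal case $i=n-1$ are fine, but the core of your argument for $1\le i\le n-2$ rests on the uniform bound $|V(C)|\le 2^{n-i}$ for every component $C$ of $\Gamma_{\Delta_n\setminus\mathcal{C}}$, and this bound is false. The lemma is stated for an \emph{arbitrary} contracted triangulation of $\mathbb{RP}^n$, not only the $2^n$-vertex one. Already for $i=n-2$ the components of $\Gamma_{\Delta_n\setminus\mathcal{C}}$ are bi-colored cycles (gems of $\mathbb{S}^1$), and there is no a priori upper bound on their length in a general crystallization of $\mathbb{RP}^n$: contracted triangulations of $\mathbb{RP}^3$ with bi-colored cycles of length $6$ or more exist, and in any dimension one can enlarge a disjoint link by moves preserving contractedness. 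Once a single component exceeds $2^{n-i}$ vertices, the inequality $g_{\Delta_n\setminus\mathcal{C}}\ge |V(\Gamma)|/2^{n-i}$ breaks down, so neither the double-cover route nor any appeal to the global topology can rescue the estimate --- the estimate itself is not true. Your fallback, that the subcomplex $K_{\mathcal{C}}$ spanned by the colors of $\mathcal{C}$ is a contracted triangulation of $\mathbb{RP}^i$, fails for the same reason: an $(i-1)$-simplex of $K_{\mathcal{C}}$ colored by $\mathcal{C}\setminus\{k_j\}$ lies in as many $i$-simplices of $K_{\mathcal{C}}$ as there are components of $\Gamma_{\Delta_n\setminus\mathcal{C}}$ inside the corresponding component of $\Gamma_{(\Delta_n\setminus\mathcal{C})\cup\{k_j\}}$, and this number need not equal $2$; for instance, when $|\mathcal{C}|=2$ the complex $K_{\mathcal{C}}$ is a multigraph on two vertices with $g_{\Delta_n\setminus\mathcal{C}}$ parallel edges, which is a closed $1$-manifold only when that number is exactly $2$.

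The paper's proof proceeds differently: it fixes a color $j\notin\mathcal{C}$, removes the disjoint star of the unique vertex $v_j$ (an $n$-ball whose boundary is the disjoint link of $v_j$), and uses the fact that $\mathbb{RP}^{n-1}$ is a spine of $\mathbb{RP}^n$ to conclude that the identifications induced on the disjoint link yield a quotient complex deformation retracting to a contracted triangulation of $\mathbb{RP}^{n-1}$, without decreasing the number of $i$-simplices colored by $\mathcal{C}$. Iterating over all colors outside $\mathcal{C}$ descends to a contracted triangulation of $\mathbb{RP}^i$, where Proposition~\ref{min} applied in dimension $i$ gives at least $2^i$ top-dimensional simplices. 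That dimension descent via the spine is the idea missing from your proposal; the component-size control you were after is, for the minimal crystallization, a \emph{consequence} of this lemma (via Corollary~\ref{corollary:cycle}), not an available input to its proof.
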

\begin{proof}
Let $K$ be a contracted triangulation of $\mathbb{RP}^n$.
Let $j\in \Delta_n \backslash \{k_0,k_1,\dots,k_i\}$. The disjoint star of $v_j$ (unique vertex colored by color $j$) in $K$ is an $n$-ball whose boundary is the disjoint link of $v_j$ in $K$, which is an $(n-1)$-sphere. Then, $K$ is obtained from the disjoint star of $v_j$ by identifying the $(n-1)$-simplices of the boundary of the disjoint star of $v_j$ pairwise. Since $\mathbb{RP}^{n-1}$ is a spine of $\mathbb{RP}^{n}$ (i.e., $\mathbb{RP}^{n}-\mathbb{B}^n$ deformation retracts to $\mathbb{RP}^{n-1}$), the induced identifications on the disjoint link of $v_j$ will give us a quotient complex $M$ that deformation retracts to a contracted triangulation of $\mathbb{RP}^{n-1}$. Therefore, the number of $i$-simplices colored by $\{k_0,k_1,\dots,k_i\}$ in the given contracted triangulation of $\mathbb{RP}^n$ is greater than or equal to the number of $i$-simplices  colored by  $\{k_0,k_1,\dots,k_i\}$ in the induced contracted triangulation of $\mathbb{RP}^{n-1}.$ 
By iteratively applying the same argument, we see that the number of $i$-simplices colored by $\{k_0,k_1,\dots,k_i\}$ in the given contracted triangulation of $\mathbb{RP}^n$ is greater than or equal to the number of $i$-simplices  colored by  $\{k_0,k_1,\dots,k_i\}$ in the induced contracted triangulation of $\mathbb{RP}^{i}.$
According to Proposition \ref{min}, this number is at least $2^{i}.$ This completes the proof.
\end{proof}

\begin{corollary}\label{corollary:cycle}
Let $(\Gamma,\gamma)$ be a crystallization of $\mathbb{RP}^n$ with $2^n$ vertices. Then, the following properties hold:
\begin{enumerate}[$(a)$]
\item For any subset $\{k_0,k_1,\dots,k_i\}\subset \Delta_n$, we have $g_{\{k_0,k_1,\dots,k_i\}}=2^{n-i-1}$.
\item Every bi-colored cycle in $\Gamma$  has length $4$.
\end{enumerate}   
\end{corollary}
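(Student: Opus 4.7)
The plan is to establish (a) and (b) together by combining Lemma~\ref{i-simplex}, Proposition~\ref{length}, and a group-theoretic analysis of the colour permutations on $V(\Gamma)$. First I would dispose of the extremal cases of (a): for $|\mathcal{C}|=1$ the $k$-coloured edges form a perfect matching of the $2^n$ vertices, so $g_{\{k\}}=2^{n-1}$; for $|\mathcal{C}|=n$ the contractedness hypothesis immediately gives $g_{\mathcal{C}}=1$.

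Next I would treat the pivotal case $|\mathcal{C}|=2$, which simultaneously yields (b). By the correspondence between components of $\Gamma_{\Delta_n\setminus\mathcal{C}}$ and simplices of $\mathcal{K}(\Gamma)$, Lemma~\ref{i-simplex} applied with $i=n-2$ (valid for $n\ge 3$; the case $n=2$ is already covered by the extremal cases) gives $g_{\{k_0,k_1\}}\ge 2^{n-2}$. On the other hand, $\Gamma_{\{k_0,k_1\}}$ is $2$-regular with a proper $2$-edge-colouring, hence a disjoint union of even cycles, each of length at least $4$ by Proposition~\ref{length}. Thus $g_{\{k_0,k_1\}}\le 2^n/4=2^{n-2}$, and equality throughout pins down both $g_{\{k_0,k_1\}}=2^{n-2}$ and the fact that every bi-coloured cycle has length exactly $4$, proving (b).

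With (b) in hand I handle the remaining sizes of (a) via a group-theoretic argument. For each colour $k$, let $c_k\colon V(\Gamma)\to V(\Gamma)$ send a vertex to its unique $k$-neighbour; this is a fixed-point-free involution, and (b) is equivalent to $c_{k_0}c_{k_1}=c_{k_1}c_{k_0}$ for all $k_0\ne k_1$. Hence $G:=\langle c_0,\dots,c_n\rangle\le\mathrm{Sym}(V(\Gamma))$ is abelian, generated by $n+1$ involutions, so $G\cong\mathbb{Z}_2^{n+1}/R$ for some subgroup $R$. Connectedness of $\Gamma$ makes the action of $G$ transitive, and an abelian transitive action is automatically free, so $|G|=2^n$ and $|R|=2$. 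Writing $R=\{0,\rho\}$ and $S=\{k\in\Delta_n:\rho_k=1\}$, the unique non-trivial relation reads $\prod_{k\in S}c_k=\mathrm{id}$. Contractedness of $\Gamma$ requires $\Gamma_{\hat{j}}$ to be connected for every $j$, i.e.\ $\langle c_k:k\ne j\rangle=G$, which is possible only if $j\in S$; running this over every $j$ forces $S=\Delta_n$. Consequently, for any $\mathcal{C}\subsetneq\Delta_n$ the involutions $\{c_k:k\in\mathcal{C}\}$ are independent in $G$, the subgroup $H_{\mathcal{C}}=\langle c_k:k\in\mathcal{C}\rangle$ has order $2^{|\mathcal{C}|}$, and its free orbits on $V(\Gamma)$ are precisely the connected components of $\Gamma_{\mathcal{C}}$, giving $g_{\mathcal{C}}=|G|/|H_{\mathcal{C}}|=2^{n-|\mathcal{C}|}$.

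I expect the main obstacle to be identifying the relation support as the full set $S=\Delta_n$: a priori $S$ could be any non-empty proper subset, in which case some $g_{\mathcal{C}}$ would exceed $2^{n-|\mathcal{C}|}$ and conflict with Lemma~\ref{i-simplex}. It is the contractedness condition, invoked one colour at a time through the connectedness of each $\Gamma_{\hat{j}}$, that pins down $S=\Delta_n$ and closes the argument.
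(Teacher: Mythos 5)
Your proposal is correct, but it reaches part $(a)$ by a genuinely different route from the paper. The paper deduces $(a)$ directly from the proof of Lemma \ref{i-simplex}: iterating the disjoint-link/spine construction shows that the number of $i$-simplices on a fixed colour set in a $2^n$-facet contracted triangulation of $\mathbb{RP}^n$ equals the corresponding count in an induced minimal contracted triangulation of $\mathbb{RP}^{i}$, which is exactly $2^i$ by Proposition \ref{min}; part $(b)$ is then read off from $(a)$ together with Proposition \ref{length}, exactly as in your $|\mathcal{C}|=2$ step. You instead invoke Lemma \ref{i-simplex} only once (with $i=n-2$) to get the lower bound $g_{\{k_0,k_1\}}\ge 2^{n-2}$, obtain the matching upper bound $2^n/4$ from Proposition \ref{length}, conclude $(b)$, and then derive all remaining cases of $(a)$ algebraically: the colour involutions $c_k$ commute because every bi-coloured cycle has length $4$, the group $G=\langle c_0,\dots,c_n\rangle$ is elementary abelian and acts regularly by transitivity, contractedness forces the unique relation to be $c_0c_1\cdots c_n=\mathrm{id}$, and the component counts fall out as indices of freely acting subgroups. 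Both arguments are sound; yours has the advantage of making the upper bound in $(a)$ fully explicit (the paper's ``follows directly from the proof of Lemma \ref{i-simplex}'' leaves the passage from the inequality of that lemma to the equality of the corollary implicit), and of exposing $\Gamma$ as essentially a Cayley graph of $\mathbb{Z}_2^{n+1}/\langle(1,1,\dots,1)\rangle$, which anticipates the uniqueness statement of Theorem \ref{theorem:unique}. The only point worth tightening is the $n=2$ case of $(b)$: it is not literally ``covered by the extremal cases'' of $(a)$, though it follows at once since $\Gamma_{\hat{j}}$ is then a connected $2$-regular graph on four vertices, hence a single $4$-cycle.
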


\begin{proof}
Part \((a)\) follows directly from the proof of Lemma \ref{i-simplex}, while Part \((b)\) follows from Part \((a)\) and Proposition \ref{length}.
\end{proof}

\begin{theorem}\label{theorem:unique}
There exists a unique crystallization of $\mathbb{RP}^n$ with $2^n$ vertices for every $n\ge 2$.
\end{theorem}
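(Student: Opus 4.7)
The plan is to prove that any crystallization $(\Gamma',\gamma')$ of $\mathbb{RP}^n$ with $2^n$ vertices is isomorphic, as an edge-colored graph, to the canonical $2^n$-vertex crystallization $(\Gamma,\gamma)$ constructed at the beginning of Subsection \ref{3.1}. My strategy is purely group-theoretic: read off the color involutions of $\Gamma'$, show that they generate an elementary abelian $2$-group acting simply transitively on $V(\Gamma')$ with the single defining relation $\sigma_0\sigma_1\cdots\sigma_n = \mathrm{id}$, and then match this structure against the canonical construction from the characteristic vectors $b_0,b_1,\ldots,b_n$.

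First, for each color $i \in \Delta_n$ let $\sigma_i \in \mathrm{Sym}(V(\Gamma'))$ denote the fixed-point-free involution sending each vertex to its unique $i$-colored neighbor. Since every bi-colored cycle of $\Gamma'$ has length exactly $4$ by Corollary \ref{corollary:cycle}(b), traversing the colors $i,j,i,j$ starting at any vertex returns to the start, so $(\sigma_j\sigma_i)^2 = \mathrm{id}$ and hence $\sigma_i$ and $\sigma_j$ commute. Moreover, $\sigma_i = \sigma_j$ with $i \ne j$ would produce a bi-colored $\{i,j\}$-cycle of length $2$, contradicting Proposition \ref{length}, so the $\sigma_i$ are pairwise distinct. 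The group $G := \langle \sigma_0,\sigma_1,\ldots,\sigma_n\rangle \le \mathrm{Sym}(V(\Gamma'))$ is therefore an elementary abelian $2$-group and a quotient of $\mathbb{Z}_2^{n+1}$.

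The key step is to pin down $|G|$ and its relation using Corollary \ref{corollary:cycle}(a). For any proper subset $\mathcal{C} \subsetneq \Delta_n$, the connected components of $\Gamma'_\mathcal{C}$ are precisely the orbits of $H_\mathcal{C} := \langle \sigma_i : i \in \mathcal{C}\rangle$ on $V(\Gamma')$. Since $\Gamma'$ is connected, $G$ acts transitively, and by abelianity all $H_\mathcal{C}$-orbits have the common size $2^n/g_\mathcal{C} = 2^{|\mathcal{C}|}$. Combined with $|H_\mathcal{C}| \le 2^{|\mathcal{C}|}$ this forces $|H_\mathcal{C}| = 2^{|\mathcal{C}|}$, so the generators indexed by $\mathcal{C}$ are independent in $G$; in particular, taking $|\mathcal{C}| = n$ yields $|G| \ge 2^n$. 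Conversely, in an abelian transitive action all point stabilizers coincide in a common subgroup that fixes every vertex and is therefore trivial as a subgroup of $\mathrm{Sym}(V(\Gamma'))$; hence the action is free and $|G| = 2^n$. The kernel of $\mathbb{Z}_2^{n+1}\twoheadrightarrow G$ thus has order $2$, giving a unique non-trivial relation $\prod_{i \in S}\sigma_i = \mathrm{id}$, and the independence on every proper sub-family forces $S = \Delta_n$. The defining relation is therefore $\sigma_0\sigma_1\cdots\sigma_n = \mathrm{id}$, matching $b_0 + b_1 + \cdots + b_n = \mathbf{0}$ of the canonical model.

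To conclude, I fix a base vertex $v_0^\ast \in V(\Gamma')$ and, via the simply transitive action of $G$, identify $V(\Gamma')$ with $G$ through $g \mapsto g\cdot v_0^\ast$. Matching the basis $\{\sigma_1,\ldots,\sigma_n\}$ of $G \cong \mathbb{Z}_2^n$ with the basis $\{b_1,\ldots,b_n\}$ used to construct $(\Gamma,\gamma)$ yields a color-preserving bijection $V(\Gamma') \to V(\Gamma)$ that carries every $i$-edge to an $i$-edge (after at most a relabeling of $\Delta_n$), i.e., a graph isomorphism. I expect the main obstacle to be the middle step: converting the purely combinatorial identities $g_\mathcal{C} = 2^{n-|\mathcal{C}|}$ into the algebraic fact that every proper sub-family of $\{\sigma_0,\ldots,\sigma_n\}$ is independent in $G$, and pinning the unique relation to the full product $\sigma_0\sigma_1\cdots\sigma_n$; once these are in hand, the identification with the canonical crystallization is essentially formal.
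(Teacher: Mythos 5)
Your proof is correct, but it takes a genuinely different route from the paper's. The paper argues geometrically and inductively: using Corollary \ref{corollary:cycle} it shows that the components of $\Gamma_{\{0,1\}}$ are $4$-cycles ($2$-cubes), that each subsequent color $k$ must join distinct components of $\Gamma_{\{0,\dots,k-1\}}$ (since otherwise some disjoint link would be a projective space $\mathbb{RP}^r$ rather than a sphere), so that the components build up uniquely to a single $n$-cube colored by $\{0,\dots,n-1\}$, and finally that every $n$-colored edge must join antipodal vertices of that cube. You replace the cube-building and the topological input about disjoint links with algebra on the color involutions: commutativity from the all-cycles-have-length-$4$ condition, simple transitivity of $G=\langle\sigma_0,\dots,\sigma_n\rangle$ from the component counts $g_{\mathcal C}=2^{n-|\mathcal C|}$ together with abelianity (normality of each $H_{\mathcal C}$ makes all its orbits equicardinal, and equal stabilizers in an abelian transitive action force freeness), and the single relation $\sigma_0\sigma_1\cdots\sigma_n=\mathrm{id}$ from the independence of every proper subfamily. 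This exhibits $\Gamma'$ as the Cayley graph of $\mathbb{Z}_2^n$ with connection multiset $\{b_0,\dots,b_n\}$, which is precisely the canonical construction (the color-$j$ involution there is translation by $b_j$, and $b_0+b_1+\cdots+b_n=\mathbf{0}$), so your final identification is immediate and in fact needs no permutation of colors at all. Both proofs consume the same combinatorial facts (Corollary \ref{corollary:cycle} plus contractedness); yours is arguably more self-contained, since the paper's step ruling out a color-$k$ edge inside a $k$-cube silently invokes that disjoint links in a gem of a manifold are spheres, whereas you use only the counts. The paper's version, in exchange, makes the hypercube structure of the crystallization explicit and visual.
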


\begin{proof}
Let \((\Gamma, \gamma)\) be a crystallization of \(\mathbb{RP}^n\) with \(2^n\) vertices. Due to Corollary \ref{corollary:cycle}, \(g_{\{0,1\}} = 2^{n-2}\), i.e., we have \(2^{n-2}\) disjoint 2-cubes in $\Gamma_{\{0,1\}}$. If $n=2$, then by Corollary \ref{corollary:cycle}(b), the $2$-colored edges can only join diagonal vertices of the $2$-cube. Hence, we get that $(\Gamma,\gamma)$ is the unique crystallization of $\mathbb{RP}^2$. If $n=3$, since $\Gamma$ is a crystallization, an edge of color $2$ joins the vertices of the two components of $\Gamma_{\{0,1\}}$. By Corollary \ref{corollary:cycle}(b), $\Gamma_{\{0,1,2\}}$ is a $3$-cube. If a $3$-colored edge joins the diagonal vertices of a $2$-face of this $3$-cube, then by Corollary \ref{corollary:cycle}(b), we get a representation of $\mathbb{RP}^2$. This implies that the disjoint link of a vertex is $\mathbb{RP}^2$, which is not possible. So, an edge of color $3$ must join two diagonal vertices of this $3$-cube. Hence, we get that $(\Gamma,\gamma)$ is the unique crystallization of $\mathbb{RP}^3$.   

If $n> 3$, note that within any of these $2$-cubes, an edge of color $2$ cannot exist. If such an edge were present, then by Corollary \ref{corollary:cycle}$(b)$, one component of $\Gamma_{\{0,1,2\}}$ would represent \(\mathbb{RP}^2\). This leads to a contradiction as it would imply that the disjoint link of an \((n-3)\)-simplex (with vertices colored by \(\Delta_n \backslash \{0,1,2\}\)) is \(\mathbb{RP}^2\). 
So, an edge of color $2$ can only be between the vertices of two different components of $\Gamma_{\{0,1\}}$. By Corollary \ref{corollary:cycle}$(b)$, two components of $\Gamma_{\{0,1\}}$ will form a $3$-cube in $\Gamma_{\{0,1,2\}}$. Also, by Corollary \ref{corollary:cycle}$(a)$, we have $g_{\{0,1,2\}}=2^{n-3}.$ Thus, we get $2^{n-3}$ $3$-cubes in $\Gamma_{\{0,1,2\}}$, each having edges colored by $\{0,1,2\}$. Now, again note that an edge of color $3$ cannot exist within a $3$-cube. By Corollary \ref{corollary:cycle}$(b)$, the presence of such an edge would result in a representation of either \(\mathbb{RP}^3\) or \(\mathbb{RP}^2\), depending on whether the $3$-colored edge is a diagonal of the $3$-cube or lies on one of its faces. Thus, an edge of color $3$ can only be between the vertices of two different components of \(\Gamma_{\{0,1,2\}}\).
Thus, proceeding in the same manner, we finally get two $(n-1)$-cubes whose edges are colored by $\{0,1,\dots,n-2\}$. Since $\Gamma$ is a crystallization, $\Gamma_{\hat{n}}$ is connected. Hence, by Corollary \ref{corollary:cycle}(b), $\Gamma_{\hat{n}}$ is an $n$-cube. Clearly, this $n$-cube represents $\mathbb S^{n-1}$, and it is obtained uniquely.
Again, note that if we connect two vertices by an edge of color $n$ which are present diagonally in an $r$-cube whose edges are colored by $\{k_0,k_1,\dots,k_{r-1}\} \subset \{0,1,\dots,n-1\},$ where $2\le r\le n-1 $, then by Corollary \ref{corollary:cycle}$(b)$, other $n$-colored edges, incident to the vertices of this $r$-cube, will have to be in the same $r$-cube. This results in a representation of $\mathbb{RP}^{r}$, which implies that the disjoint link of an $(n-r-1)$-simplex is $\mathbb{RP}^{r}$. So, there is a unique choice for the vertices of an edge of color $n$. The endpoints of every $n$-colored edge are the diagonal vertices of the $n$-cube. Thus, this $(\Gamma,\gamma)$ is unique.
\end{proof}

\subsection{On the regular genus of $\mathbb{RP}^{n-1}$-bundles over $\mathbb{S}^1$} 

\noindent \textbf{\underline{Construction of a gem of the small cover $M^n(\lambda)$ over $\Delta^{n-1} \times I$:}} 
The simple polytope $\Delta^{n-1} \times I$ has $2n$ vertices, where $n\ge 3$. Let us denote the simplex $\Delta^{n-1}$ by $[a_0,a_1,\dots,a_{n-1}]$, the $(n-1)$-simplex $\Delta^{n-1} \times \{0\}$ by $[v_0,v_1,\dots,v_{n-1}]$ and $\Delta^{n-1} \times \{1\}$ by $[w_1,w_2,\dots,w_{n}]$, where $(a_i,0)=v_{i}$ and $(a_i,1)=w_{i+1}$ for all $0\le i \le n-1$. Clearly, the number of $(n-1)$-faces of the polytope $\Delta^{n-1} \times I$ is $n+2$, and let the set of these faces be 
$$\mathcal F=\bigl\{(v_0,\dots,v_{i-1}, \hat{v}_i, v_{i+1},\dots,v_{n-1},w_1,\dots,w_{i}, \hat{w}_{i+1}, w_{i+2},\dots,w_{n})=F_i,$$ $$\  [v_0,\dots,v_{n-1}]=F_{n},\ [w_1,\dots,w_{n}]=F_{n+1}\ |\  0\le i\le n-1 \bigr\},$$ 
where the `hat' symbol  $\,\hat{}\,$  over a vertex indicates that the vertex is deleted.
Now, let $\lambda:\mathcal F \to \Z$ be a $\mathbb Z_2$-characteristic function. Then, from Subsection \ref{smallcover}, we have the small cover $M^n(\lambda)=\Z \times (\Delta^{n-1} \times I)/\sim$ over $\Delta^{n-1} \times I$. Fix an order of the elements of $\Z$ and let $g_i$ denote the $i^{th}$ element of $\Z$. Let us denote $g_i\times (\Delta^{n-1} \times I)$ by $t_i$ for all $1\le i\le 2^n$. Now, color the vertices \( v_0^i, v_1^i, \dots, v_{n-1}^i \) with the labels (colors) \( 0, 1, \dots, n-1 \) in order, and the vertices \( w_1^i, w_2^i, \dots, w_n^i \) with the labels \( 1, 2, \dots, n \) in order, for all $1\le i\le 2^n$. Using the standard triangulation technique, we get an $(n+1)$-colorable simplicial cell complex with boundary whose geometric carrier is $t_i$. In this colored triangulation of $t_i$, the number of $n$-simplices is $n$. These $n$-simplices are $t_i^1=[v_0^i,v_1^i,\dots,v_{n-1}^i,w_n^i]$, $t_i^2=[v_0^i,v_1^i,\dots,w_{n-1}^i,w_n^i],\dots$, $t_i^n=[v_0^i,w_1^i,\dots,w_{n-1}^i,w_n^i]$. Therefore, $M^n(\lambda)$ admits a colored triangulation, and hence $M^n(\lambda)$ can be represented by an $(n+1)$-regular colored graph $(\Gamma,\gamma)$. Clearly, in this colored triangulation of $M^n(\lambda)$, the number of $n$-simplices is $n2^n$. We denote the vertex of $\Gamma$ corresponding to the $n$-simplex $t_i^j$ by $T_i^j$ for all $1\le i \le 2^n$ and $1\le j\le n$. It is evident from the construction of $M^n(\lambda)$ that $g_{\hat{0}}=g_{\hat{n}}=1$ and $g_{\hat{j}}=2$ for all $1\le j\le n-1$.  

Note that, with a similar construction, given a simple $n$-polytope $P^n$, together with a $\mathbb Z_2$-characteristic function $\lambda:\mathcal F(P^n)\to \Z$, one can always obtain a gem of the small cover $M^n(\lambda)$, using the colored triangulation of $P^n$. We illustrate the above construction with an example of $\Delta^{3} \times I$.

\begin{figure}[ht]
\tikzstyle{ver}=[]
\tikzstyle{edge} = [draw,thick,-]
\centering
\begin{tikzpicture}[scale=1]

\node[ver] () at (-6,5){$t_i^1=[v_0^i,v_1^i,v_2^i,v_3^i,w_4^i]$}; 
\node[ver] () at (-6,4.3){$[v_0^i,v_1^i,v_2^i,v_3^i]$};
\node[ver] () at (-3,4.3){$[v_0^i,v_1^i,v_2^i,w_4^i]$};
\node[ver] () at (0,4.3){$[v_0^i,v_1^i,v_3^i,w_4^i]$};
\node[ver] () at (3,4.3){$[v_0^i,v_2^i,v_3^i,w_4^i]$};
\node[ver] () at (6,4.3){$[v_1^i,v_2^i,v_3^i,w_4^i]$};

\node[ver] () at (-6,3.8){\scriptsize{$(0,0,0,1)$}};
\node[ver] () at (0,3.8){\scriptsize{$(0,0,1,0)$}};
\node[ver] () at (3,3.8){\scriptsize{$(0,1,0,0)$}};
\node[ver] () at (6,3.8){\scriptsize{$(1,0,0,0)$}};

\node[ver] () at (-6,3){$t_i^2=[v_0^i,v_1^i,v_2^i,w_3^i,w_4^i]$}; 
\node[ver] () at (-6,2.3){$[v_0^i,v_1^i,v_2^i,w_3^i]$};
\node[ver] () at (-3,2.3){$[v_0^i,v_1^i,v_2^i,w_4^i]$};
\node[ver] () at (0,2.3){$[v_0^i,v_1^i,w_3^i,w_4^i]$};
\node[ver] () at (3,2.3){$[v_0^i,v_2^i,w_3^i,w_4^i]$};
\node[ver] () at (6,2.3){$[v_1^i,v_2^i,w_3^i,w_4^i]$};

\node[ver] () at (-6,1.8){\scriptsize{$(1,1,1,0)$}};
\node[ver] () at (3,1.8){\scriptsize{$(0,1,0,0)$}};
\node[ver] () at (6,1.8){\scriptsize{$(1,0,0,0)$}};

\node[ver] () at (-6,1){$t_i^3=[v_0^i,v_1^i,w_2^i,w_3^i,w_4^i]$}; 
\node[ver] () at (-6,0.3){$[v_0^i,v_1^i,w_2^i,w_3^i]$};
\node[ver] () at (-3,0.3){$[v_0^i,v_1^i,w_2^i,w_4^i]$};
\node[ver] () at (0,0.3){$[v_0^i,v_1^i,w_3^i,w_4^i]$};
\node[ver] () at (3,0.3){$[v_0^i,w_2^i,w_3^i,w_4^i]$};
\node[ver] () at (6,0.3){$[v_1^i,w_2^i,w_3^i,w_4^i]$};

\node[ver] () at (-6,-0.2){\scriptsize{$(1,1,1,0)$}};
\node[ver] () at (-3,-0.2){\scriptsize{$(0,0,1,0)$}};
\node[ver] () at (6,-0.2){\scriptsize{$(1,0,0,0)$}};

\node[ver] () at (-6,-1){$t_i^4=[v_0^i,w_1^i,w_2^i,w_3^i,w_4^i]$}; 
\node[ver] () at (-6,-1.7){$[v_0^i,w_1^i,w_2^i,w_3^i]$};
\node[ver] () at (-3,-1.7){$[v_0^i,w_1^i,w_2^i,w_4^i]$};
\node[ver] () at (0,-1.7){$[v_0^i,w_1^i,w_3^i,w_4^i]$};
\node[ver] () at (3,-1.7){$[v_0^i,w_2^i,w_3^i,w_4^i]$};
\node[ver] () at (6,-1.7){$[w_1^i,w_2^i,w_3^i,w_4^i]$};

\node[ver] () at (-6,-2.2){\scriptsize{$(1,1,1,0)$}};
\node[ver] () at (-3,-2.2){\scriptsize{$(0,0,1,0)$}};
\node[ver] () at (0,-2.2){\scriptsize{$(0,1,0,0)$}};
\node[ver] () at (6,-2.2){\scriptsize{$(0,0,0,1)$}};

\path[edge] (-3,4)--(-3,2.5);
\path[edge] (0,2)--(0,0.5);
\path[edge] (3,0)--(3,-1.5);

\node[ver] () at (-2.2,3.4){\scriptsize{identified}};
\node[ver] () at (0.8,1.4){\scriptsize{identified}};
\node[ver] () at (3.8,-0.6){\scriptsize{identified}};

\end{tikzpicture}
\caption{$t_i^j$ with all its $3$-faces and their $\mathbb Z_2$-characteristic vectors for all $1\le j\le 4$. }\label{table}
\end{figure}

\begin{example}\label{ex}
{\rm Let $\lambda: \mathcal F(\Delta^{3} \times I) \to \mathbb Z_2^4$ be defined as $\lambda(F_0)=(1,0,0,0),\ \lambda(F_1)=(0,1,0,0),\ \lambda(F_2) \\=(0,0,1,0),\ \lambda(F_3)=(1,1,1,0),\ \lambda(F_4)=(0,0,0,1),\ \lambda(F_5)=(0,0,0,1)$. Clearly, $\lambda$ is a $Z_2$-characteristic function and $M^4(\lambda)=\mathbb{RP}^3\times \mathbb{S}^1$ (cf. Proposition \ref{Proposition: product}). Now, in $\mathbb Z_2^4\times (\Delta^3\times I)$, we have $16$ copies of $\Delta^{3} \times I$, and in each copy, the number of $4$-simplices is $4$. Let us fix an order of the elements of $\mathbb{Z}_2^4$ as follows: $g_1=(1,0,0,0)$, $g_2=(0,1,0,0)$, $g_3=(0,0,1,0)$, $g_4=(0,0,0,1)$, $g_5=(0,1,1,1)$, $g_6=(1,0,1,1)$, $g_7=(1,1,0,1)$, $g_8=(1,1,1,0)$, $g_9=(0,0,1,1)$, $g_{10}=(0,1,0,1)$, $g_{11}=(0,1,1,0)$, $g_{12}=(1,0,0,1)$, $g_{13}=(1,0,1,0)$, $g_{14}=(1,1,0,0)$, $g_{15}=(1,1,1,1)$, $g_{16}=(0,0,0,0)$. Let the $4$-simplices of $t_i$ be $t_i^1=[v_0^i,v_1^i,v_2^i,v_3^i,w_4^i]$, $t_i^2=[v_0^i,v_1^i,v_2^i,w_3^i,w_4^i]$, $t_i^3=[v_0^i,v_1^i,w_2^i,w_3^i,w_4^i]$, $t_i^4=[v_0^i,w_1^i,w_2^i,w_3^i,w_4^i]$, and we will denote the vertex of the gem $(\Gamma,\gamma)$ corresponding to the $4$-simplex $t_i^j$ by $T_i^j$ for all $1\le i\le 16$ and $1\le j\le 4$. In Figure \ref{table}, for $t_i$, where $1\le i \le 16$, all the $3$-faces of $t_i^j,$ are written, and below them, their corresponding $\mathbb{Z}_2$-characteristic vectors are written for all $1\le j\le 4$. Figure $\ref{fig1} (a)$ exhibits the gem $(\Gamma,\gamma)$ of $M^4(\lambda)$, a small cover over $\Delta^3 \times I.$}
\end{example}

 \begin{figure}[h!]
\tikzstyle{ver}=[]
\tikzstyle{verti}=[circle, draw, fill=black!100, inner sep=0pt, minimum width=3pt]
\tikzstyle{edge} = [draw,thick,-]
    \centering
\begin{tikzpicture}[scale=0.4]
\begin{scope}[shift={(0,40)}]
\begin{scope}[shift={(-16,16)}]
\foreach \x/\y/\z in
{1.5/1.5/1,1.5/-1.5/2,-1.5/-1.5/3,-1.5/1.5/4,3/3/5,3/-3/6,-3/-3/7,-3/3/8,4.5/4.5/9,4.5/-4.5/10,-4.5/-4.5/11,-4.5/4.5/12,6/6/13,6/-6/14,-6/-6/15,-6/6/16}
{\node[verti] (a\z) at (\x,\y){};}

\foreach \x/\y/\z in
{1.1/2/12,1.1/-2/4,-1.1/-2/16,-1.1/2/1,2.6/3.5/7,2.6/-3.5/10,-2.6/-3.5/2,-2.6/3.5/14,4.1/5/15,4.1/-5/5,-4.1/-5/11,-4.1/5/8,5.6/6.5/6,5.6/-6.5/9,-5.6/-6.5/3,-5.6/6.5/13}
{\node[ver] () at (\x,\y){\tiny{$T_{\z}^1$}};}

\foreach \x/\y in 
{5/9,6/10,7/11,8/12,1/4,2/3,6/7,8/5,10/11,12/9,14/15,16/13}
{\path[edge] (a\x)--(a\y);}

\foreach \x/\y in 
{1/4,2/3,6/7,8/5,10/11,12/9,14/15,16/13}
{\draw [line width=2pt, line cap=round, dash pattern=on 0pt off 1.3\pgflinewidth]  (a\x) -- (a\y);}

\foreach \x/\y in 
{1/2,4/3,6/5,8/7,10/9,12/11,14/13,16/15}
{\draw[decorate,decoration={snake, amplitude=1pt, segment length=8pt}]
(a\x) -- (a\y);}

\foreach \x/\y in 
{5/1,6/2,7/3,8/4,9/13,10/14,11/15,12/16}
{\path[edge,dotted] (a\x)--(a\y);}

\draw[edge] plot [smooth,tension=0.5] coordinates{(a1) (4.2,2.5) (a13)};

\draw[edge] plot [smooth,tension=0.5] coordinates{(a2) (4.2,-2.5) (a14)};

\draw[edge] plot [smooth,tension=0.5] coordinates{(a3) (-4.2,-2.5) (a15)};

\draw[edge] plot [smooth,tension=0.5] coordinates{(a4) (-4.2,2.5) (a16)};

\end{scope}

\begin{scope}[shift={(4,17)}]
\begin{scope}[shift={(-5,0)}]
\foreach \x/\y/\z in
{1.5/1.5/1,1.5/-1.5/2,-1.5/-1.5/3,-1.5/1.5/4,3/3/5,3/-3/6,-3/-3/7,-3/3/8}
{\node[verti] (a\z) at (\x,\y){};}

\foreach \x/\y/\z in
{1.1/2/11,1.1/-2/8,-1.1/-2/16,-1.1/2/1,2.6/3.5/3,2.6/-3.5/13,-2.6/-3.5/2,-2.6/3.5/14}
{\node[ver] () at (\x,\y){\tiny{$T_{\z}^2$}};}

\foreach \x/\y in 
{1/4,2/3,6/7,8/5}
{\path[edge] (a\x)--(a\y);}

\foreach \x/\y in 
{1/4,2/3,6/7,8/5}
{\draw [line width=2pt, line cap=round, dash pattern=on 0pt off 1.3\pgflinewidth]  (a\x) -- (a\y);}

\foreach \x/\y in 
{1/2,4/3,6/5,8/7}
{\draw[decorate,decoration={snake, amplitude=1pt, segment length=8pt}]
(a\x) -- (a\y);}

\foreach \x/\y in 
{5/1,6/2,7/3,8/4}
{\path[edge,dotted] (a\x)--(a\y);}

\end{scope}

\begin{scope}[shift={(5,0)}]
\foreach \x/\y/\z in
{1.5/1.5/1,1.5/-1.5/2,-1.5/-1.5/3,-1.5/1.5/4,3/3/5,3/-3/6,-3/-3/7,-3/3/8}
{\node[verti] (a\z) at (\x,\y){};}

\foreach \x/\y/\z in
{1.1/2/15,1.1/-2/5,-1.1/-2/12,-1.1/2/4,2.6/3.5/6,2.6/-3.5/9,-2.6/-3.5/7,-2.6/3.5/10}
{\node[ver] () at (\x,\y){\tiny{$T_{\z}^2$}};}

\foreach \x/\y in 
{1/4,2/3,6/7,8/5}
{\path[edge] (a\x)--(a\y);}

\foreach \x/\y in 
{1/4,2/3,6/7,8/5}
{\draw [line width=2pt, line cap=round, dash pattern=on 0pt off 1.3\pgflinewidth]  (a\x) -- (a\y);}

\foreach \x/\y in 
{1/2,4/3,6/5,8/7}
{\draw[decorate,decoration={snake, amplitude=1pt, segment length=8pt}]
(a\x) -- (a\y);}

\foreach \x/\y in 
{5/1,6/2,7/3,8/4}
{\path[edge,dotted] (a\x)--(a\y);}

\end{scope}

\end{scope}

\begin{scope}[shift={(-14,4.5)}]
\begin{scope}[shift={(-5,0)}]
\foreach \x/\y/\z in
{1.5/1.5/1,1.5/-1.5/2,-1.5/-1.5/3,-1.5/1.5/4,3/3/5,3/-3/6,-3/-3/7,-3/3/8}
{\node[verti] (a\z) at (\x,\y){};}

\foreach \x/\y/\z in
{1.1/2/11,1.1/-2/8,-1.1/-2/16,-1.1/2/1,2.6/3.5/2,2.6/-3.5/14,-2.6/-3.5/3,-2.6/3.5/13}
{\node[ver] () at (\x,\y){\tiny{$T_{\z}^3$}};}

\foreach \x/\y in 
{1/4,2/3,6/7,8/5}
{\path[edge] (a\x)--(a\y);}

\foreach \x/\y in 
{1/4,2/3,6/7,8/5}
{\draw [line width=2pt, line cap=round, dash pattern=on 0pt off 1.3\pgflinewidth]  (a\x) -- (a\y);}

\foreach \x/\y in 
{1/2,4/3,6/5,8/7}
{\draw[decorate,decoration={snake, amplitude=1pt, segment length=8pt}]
(a\x) -- (a\y);}

\foreach \x/\y in 
{5/1,6/2,7/3,8/4}
{\path[edge,dashed] (a\x)--(a\y);}

\end{scope}

\begin{scope}[shift={(5,0)}]
\foreach \x/\y/\z in
{1.5/1.5/1,1.5/-1.5/2,-1.5/-1.5/3,-1.5/1.5/4,3/3/5,3/-3/6,-3/-3/7,-3/3/8}
{\node[verti] (a\z) at (\x,\y){};}

\foreach \x/\y/\z in
{1.1/2/15,1.1/-2/5,-1.1/-2/12,-1.1/2/4,2.6/3.5/7,2.6/-3.5/10,-2.6/-3.5/6,-2.6/3.5/9}
{\node[ver] () at (\x,\y){\tiny{$T_{\z}^3$}};}

\foreach \x/\y in 
{1/4,2/3,6/7,8/5}
{\path[edge] (a\x)--(a\y);}

\foreach \x/\y in 
{1/4,2/3,6/7,8/5}
{\draw [line width=2pt, line cap=round, dash pattern=on 0pt off 1.3\pgflinewidth]  (a\x) -- (a\y);}

\foreach \x/\y in 
{1/2,4/3,6/5,8/7}
{\draw[decorate,decoration={snake, amplitude=1pt, segment length=8pt}]
(a\x) -- (a\y);}

\foreach \x/\y in 
{5/1,6/2,7/3,8/4}
{\path[edge,dashed] (a\x)--(a\y);}
\end{scope}

\end{scope}

\begin{scope}[shift={(6,5.5)}]
\foreach \x/\y/\z in
{1.5/1.5/1,1.5/-1.5/2,-1.5/-1.5/3,-1.5/1.5/4,3/3/5,3/-3/6,-3/-3/7,-3/3/8,4.5/4.5/9,4.5/-4.5/10,-4.5/-4.5/11,-4.5/4.5/12,6/6/13,6/-6/14,-6/-6/15,-6/6/16}
{\node[verti] (a\z) at (\x,\y){};}

\foreach \x/\y/\z in
{1.1/2/11,1.1/-2/5,-1.1/-2/12,-1.1/2/1,2.6/3.5/3,2.6/-3.5/9,-2.6/-3.5/7,-2.6/3.5/14,4.1/5/16,4.1/-5/4,-4.1/-5/15,-4.1/5/8,5.6/6.5/2,5.6/-6.5/10,-5.6/-6.5/6,-5.6/6.5/13}
{\node[ver] () at (\x,\y){\tiny{$T_{\z}^4$}};}

\foreach \x/\y in 
{1/4,2/3,6/7,8/5,10/11,12/9,14/15,16/13}
{\path[edge] (a\x)--(a\y);}

\foreach \x/\y in 
{5/9,6/10,7/11,8/12}
{\path[edge,dashed] (a\x)--(a\y);}

\foreach \x/\y in 
{1/4,2/3,6/7,8/5,10/11,12/9,14/15,16/13}
{\draw [line width=2pt, line cap=round, dash pattern=on 0pt off 1.3\pgflinewidth]  (a\x) -- (a\y);}

\foreach \x/\y in 
{1/2,4/3,6/5,8/7,10/9,12/11,14/13,16/15}
{\draw[decorate,decoration={snake, amplitude=1pt, segment length=8pt}]
(a\x) -- (a\y);}

\foreach \x/\y in 
{5/1,6/2,7/3,8/4,9/13,10/14,11/15,12/16}
{\path[edge] (a\x)--(a\y);}

\draw[edge,dashed] plot [smooth,tension=0.5] coordinates{(a1) (4.2,2.5) (a13)};

\draw[edge,dashed] plot [smooth,tension=0.5] coordinates{(a2) (4.2,-2.5) (a14)};

\draw[edge,dashed] plot [smooth,tension=0.5] coordinates{(a3) (-4.2,-2.5) (a15)};

\draw[edge,dashed] plot [smooth,tension=0.5] coordinates{(a4) (-4.2,2.5) (a16)};

\end{scope}

\begin{scope}[shift={(-5,11)}]
\foreach \x/\y/\z/\w in 
{-1/1/T_i^1/1,2/1/T_i^2/2,2/-1.5/T_i^4/4,-1/-1.5/T_i^3/3}
{\node[ver] (\w) at (\x,\y){\tiny{$\z$}};}

\path[edge,dashed] (1)--(2);
\path[edge] (2)--(3);
\path[edge,dotted] (3)--(4);

\end{scope}
\node[ver] () at (-12,-0.5){$(a)$ }; 
\end{scope}

\begin{scope}[shift={(0,15.5)}]
\begin{scope}[shift={(-13,16)}]
\foreach \x/\y/\z in
{1.5/1.5/1,1.5/-1.5/2,-1.5/-1.5/3,-1.5/1.5/4,3/3/5,3/-3/6,-3/-3/7,-3/3/8,4.5/4.5/9,4.5/-4.5/10,-6/-4.5/11,-6/4.5/12,6/6/13,6/-6/14,-7.5/-6/15,-7.5/6/16,-9/-6/17,-9/6/18,-4.5/-3/19,-4.5/3/20}
{\node[verti] (a\z) at (\x,\y){};}

\foreach \x/\y/\z in
{1.1/2/T_{12}^1,1.1/-2/T_4^1,-1.1/-2/T_{8}^2,-1.1/2/T_{11}^2,2.6/3.5/T_{7}^1,2.6/-3.5/T_{10}^1,-2.6/-3.5/T_{13}^2,-2.6/3.5/T_{3}^2,4.1/5/T_{15}^1,4.1/-5/T_{5}^1,-5.6/-5/T_{11}^1,-5.6/5/T_{8}^1,5.6/6.5/T_{6}^1,5.6/-6.5/T_{9}^1,-7.1/-6.5/T_{3}^1,-7.1/6.5/T_{13}^1,-9.7/-6/T_{16}^3,-9.7/6/T_1^3,-3.8/-2.8/T_2^3,-3.8/2.8/T_{14}^3}
{\node[ver] () at (\x,\y){\tiny{$\z$}};}

\foreach \x/\y in 
{5/9,6/10,19/11,20/12,1/4,2/3,6/7,8/5,10/11,12/9,14/15,16/13,15/17,16/18}
{\path[edge] (a\x)--(a\y);}

\foreach \x/\y in 
{1/4,2/3,6/7,8/5,10/11,12/9,14/15,16/13}
{\draw [line width=2pt, line cap=round, dash pattern=on 0pt off 1.3\pgflinewidth]  (a\x) -- (a\y);}

\foreach \x/\y in 
{1/2,4/3,6/5,8/7,10/9,12/11,14/13,16/15,17/18,19/20}
{\draw[decorate,decoration={snake, amplitude=1pt, segment length=8pt}]
(a\x) -- (a\y);}

\foreach \x/\y in 
{5/1,6/2,7/3,8/4,9/13,10/14,11/15,12/16}
{\path[edge,dotted] (a\x)--(a\y);}

\draw[edge] plot [smooth,tension=0.5] coordinates{(a1) (4.2,2.5) (a13)};

\draw[edge] plot [smooth,tension=0.5] coordinates{(a2) (4.2,-2.5) (a14)};

\end{scope}

\begin{scope}[shift={(5.5,17)}]

\foreach \x/\y/\z in
{1.5/1.5/1,1.5/-1.5/2,-1.5/-1.5/3,-1.5/1.5/4,3/3/5,3/-3/6,-3/-3/7,-3/3/8}
{\node[verti] (a\z) at (\x,\y){};}

\foreach \x/\y/\z in
{1.1/2/15,1.1/-2/5,-1.1/-2/12,-1.1/2/4,2.6/3.5/6,2.6/-3.5/9,-2.6/-3.5/7,-2.6/3.5/10}
{\node[ver] () at (\x,\y){\tiny{$T_{\z}^2$}};}

\foreach \x/\y in 
{1/4,2/3,6/7,8/5}
{\path[edge] (a\x)--(a\y);}

\foreach \x/\y in 
{1/4,2/3,6/7,8/5}
{\draw [line width=2pt, line cap=round, dash pattern=on 0pt off 1.3\pgflinewidth]  (a\x) -- (a\y);}

\foreach \x/\y in 
{1/2,4/3,6/5,8/7}
{\draw[decorate,decoration={snake, amplitude=1pt, segment length=8pt}]
(a\x) -- (a\y);}

\foreach \x/\y in 
{5/1,6/2,7/3,8/4}
{\path[edge,dotted] (a\x)--(a\y);}

\end{scope}

\begin{scope}[shift={(-14,4.5)}]
\begin{scope}[shift={(-5,0)}]
\foreach \x/\y/\z in
{1.5/1.5/1,1.5/-1.5/2,-1.5/-1.5/3,-1.5/1.5/4,3/3/5,3/-3/6,-3/-3/7,-3/3/8}
{\node[verti] (a\z) at (\x,\y){};}

\foreach \x/\y/\z in
{1.1/2/11,1.1/-2/8,-1.1/-2/16,-1.1/2/1,2.6/3.5/2,2.6/-3.5/14,-2.6/-3.5/3,-2.6/3.5/13}
{\node[ver] () at (\x,\y){\tiny{$T_{\z}^3$}};}

\foreach \x/\y in 
{1/4,2/3,6/7,8/5}
{\path[edge] (a\x)--(a\y);}

\foreach \x/\y in 
{1/4,2/3,6/7,8/5}
{\draw [line width=2pt, line cap=round, dash pattern=on 0pt off 1.3\pgflinewidth]  (a\x) -- (a\y);}

\foreach \x/\y in 
{1/2,4/3,6/5,8/7}
{\draw[decorate,decoration={snake, amplitude=1pt, segment length=8pt}]
(a\x) -- (a\y);}

\foreach \x/\y in 
{5/1,6/2,7/3,8/4}
{\path[edge,dashed] (a\x)--(a\y);}

\end{scope}

\begin{scope}[shift={(5,0)}]
\foreach \x/\y/\z in
{1.5/1.5/1,1.5/-1.5/2,-1.5/-1.5/3,-1.5/1.5/4,3/3/5,3/-3/6,-3/-3/7,-3/3/8}
{\node[verti] (a\z) at (\x,\y){};}

\foreach \x/\y/\z in
{1.1/2/15,1.1/-2/5,-1.1/-2/12,-1.1/2/4,2.6/3.5/7,2.6/-3.5/10,-2.6/-3.5/6,-2.6/3.5/9}
{\node[ver] () at (\x,\y){\tiny{$T_{\z}^3$}};}

\foreach \x/\y in 
{1/4,2/3,6/7,8/5}
{\path[edge] (a\x)--(a\y);}

\foreach \x/\y in 
{1/4,2/3,6/7,8/5}
{\draw [line width=2pt, line cap=round, dash pattern=on 0pt off 1.3\pgflinewidth]  (a\x) -- (a\y);}

\foreach \x/\y in 
{1/2,4/3,6/5,8/7}
{\draw[decorate,decoration={snake, amplitude=1pt, segment length=8pt}]
(a\x) -- (a\y);}

\foreach \x/\y in 
{5/1,6/2,7/3,8/4}
{\path[edge,dashed] (a\x)--(a\y);}
\end{scope}

\end{scope}

\begin{scope}[shift={(6,5)}]
\foreach \x/\y/\z in
{1.5/1.5/1,1.5/-1.5/2,-1.5/-1.5/3,-1.5/1.5/4,3/3/5,3/-3/6,-3/-3/7,-3/3/8,4.5/4.5/9,4.5/-4.5/10,-4.5/-4.5/11,-4.5/4.5/12,6/6/13,6/-6/14,-6/-6/15,-6/6/16}
{\node[verti] (a\z) at (\x,\y){};}

\foreach \x/\y/\z in
{1.1/2/11,1.1/-2/5,-1.1/-2/12,-1.1/2/1,2.6/3.5/3,2.6/-3.5/9,-2.6/-3.5/7,-2.6/3.5/14,4.1/5/16,4.1/-5/4,-4.1/-5/15,-4.1/5/8,5.6/6.5/2,5.6/-6.5/10,-5.6/-6.5/6,-5.6/6.5/13}
{\node[ver] () at (\x,\y){\tiny{$T_{\z}^4$}};}

\foreach \x/\y in 
{1/4,2/3,6/7,8/5,10/11,12/9,14/15,16/13}
{\path[edge] (a\x)--(a\y);}

\foreach \x/\y in 
{5/9,6/10,7/11,8/12}
{\path[edge,dashed] (a\x)--(a\y);}

\foreach \x/\y in 
{1/4,2/3,6/7,8/5,10/11,12/9,14/15,16/13}
{\draw [line width=2pt, line cap=round, dash pattern=on 0pt off 1.3\pgflinewidth]  (a\x) -- (a\y);}

\foreach \x/\y in 
{1/2,4/3,6/5,8/7,10/9,12/11,14/13,16/15}
{\draw[decorate,decoration={snake, amplitude=1pt, segment length=8pt}]
(a\x) -- (a\y);}

\foreach \x/\y in 
{5/1,6/2,7/3,8/4,9/13,10/14,11/15,12/16}
{\path[edge] (a\x)--(a\y);}

\draw[edge,dashed] plot [smooth,tension=0.5] coordinates{(a1) (4.2,2.5) (a13)};

\draw[edge,dashed] plot [smooth,tension=0.5] coordinates{(a2) (4.2,-2.5) (a14)};

\draw[edge,dashed] plot [smooth,tension=0.5] coordinates{(a3) (-4.2,-2.5) (a15)};

\draw[edge,dashed] plot [smooth,tension=0.5] coordinates{(a4) (-4.2,2.5) (a16)};

\end{scope}

\begin{scope}[shift={(-4,10)}]
\foreach \x/\y/\z/\w in 
{-1/1/T_i^1/1,2/1/T_i^2/2,2/-1.5/T_i^4/4,-1/-1.5/T_i^3/3}
{\node[ver] (\w) at (\x,\y){\tiny{$\z$}};}

\path[edge,dashed] (1)--(2);
\path[edge] (2)--(3);
\path[edge,dotted] (3)--(4);

\end{scope}

\node[ver] () at (-12,-0.5){$(b)$}; 
\end{scope}

\begin{scope} [shift = {(-15,13)}]
\foreach \x/\y/\z in {1/-0.5/0,5/-0.5/1,9/-0.5/2,13/-0.5/3,17/-0.5/4}
{\node[ver] () at (\x,\y){$\z$};}
\path[edge,dashed] (12,0) -- (14,0);
\path[edge] (16,0) -- (18,0);
\draw [line width=2pt, line cap=round, dash pattern=on 0pt off 1.3\pgflinewidth]  (16,0) -- (18,0);
\path[edge] (8,0) -- (10,0);
\draw[decorate,decoration={snake, amplitude=1pt, segment length=8pt}] (0,0) -- (2,0);

\path[edge,dotted] (4,0) -- (6,0);
\end{scope} 

\end{tikzpicture}
\caption{$(a)$ The gem $\Gamma$ of $\mathbb{RP}^3\times I$ with $64$ vertices, $(b)$ The gem $\Gamma^1$ of $\mathbb{RP}^3\times I$ with $56$ vertices.}\label{fig1}
\end{figure}

 \begin{figure}[h!]
\tikzstyle{ver}=[]
\tikzstyle{verti}=[circle, draw, fill=black!100, inner sep=0pt, minimum width=3pt]
\tikzstyle{edge} = [draw,thick,-]
    \centering
\begin{tikzpicture}[scale=0.4]
\begin{scope}[shift={(0,40)}]
\begin{scope}[shift={(-13,16)}]
\foreach \x/\y/\z in
{1.5/1.5/1,1.5/-1.5/2,-1.5/-1.5/3,-1.5/1.5/4,3/3/5,3/-3/6,-3/-3/7,-3/3/8,4.5/4.5/9,4.5/-4.5/10,-6/-4.5/11,-6/4.5/12,6/6/13,6/-6/14,-7.5/-6/15,-7.5/6/16,-9/-6/17,-9/6/18,-4.5/-3/19,-4.5/3/20}
{\node[verti] (a\z) at (\x,\y){};}

\foreach \x/\y/\z in
{1.1/2/T_{12}^1,1.1/-2/T_4^1,-1.1/-2/T_{8}^2,-1.1/2/T_{11}^2,2.6/3.5/T_{7}^1,2.6/-3.5/T_{10}^1,-2.6/-3.5/T_{13}^2,-2.6/3.5/T_{3}^2,4.1/5/T_{15}^1,4.1/-5/T_{5}^1,-5.6/-5/T_{11}^1,-5.6/5/T_{8}^1,5.6/6.5/T_{6}^1,5.6/-6.5/T_{9}^1,-7.1/-6.5/T_{3}^1,-7.1/6.5/T_{13}^1,-9.7/-6/T_{16}^3,-9.7/6/T_1^3,-3.8/-2.8/T_2^3,-3.8/2.8/T_{14}^3}
{\node[ver] () at (\x,\y){\tiny{$\z$}};}

\foreach \x/\y in 
{5/9,6/10,19/11,20/12,1/4,2/3,6/7,8/5,10/11,12/9,14/15,16/13,15/17,16/18}
{\path[edge] (a\x)--(a\y);}

\foreach \x/\y in 
{1/4,2/3,6/7,8/5,10/11,12/9,14/15,16/13}
{\draw [line width=2pt, line cap=round, dash pattern=on 0pt off 1.3\pgflinewidth]  (a\x) -- (a\y);}

\foreach \x/\y in 
{1/2,4/3,6/5,8/7,10/9,12/11,14/13,16/15,17/18,19/20}
{\draw[decorate,decoration={snake, amplitude=1pt, segment length=8pt}]
(a\x) -- (a\y);}

\foreach \x/\y in 
{5/1,6/2,7/3,8/4,9/13,10/14,11/15,12/16}
{\path[edge,dotted] (a\x)--(a\y);}

\draw[edge] plot [smooth,tension=0.5] coordinates{(a1) (4.2,2.5) (a13)};

\draw[edge] plot [smooth,tension=0.5] coordinates{(a2) (4.2,-2.5) (a14)};

\end{scope}

\begin{scope}[shift={(5.5,18)}]

\foreach \x/\y/\z in
{1.5/1.5/1,1.5/-1.5/2,-1.5/-1.5/3,-1.5/1.5/4,3/3/5,3/-3/6,-3/-3/7,-3/3/8}
{\node[verti] (a\z) at (\x,\y){};}

\foreach \x/\y/\z in
{1.1/2/T_{15}^2,1.1/-2/T_{5}^2,-1.1/-2/T_{12}^2,-1.1/2/T_{4}^2,2.6/3.5/T_{6}^4,2.6/-3.5/T_{9}^4,-2.6/-3.5/T_{7}^4,-2.6/3.5/T_{10}^4}
{\node[ver] () at (\x,\y){\tiny{$\z$}};}

\foreach \x/\y in 
{1/4,2/3,6/7,8/5}
{\path[edge] (a\x)--(a\y);}

\foreach \x/\y in 
{1/4,2/3,6/7,8/5}
{\draw [line width=2pt, line cap=round, dash pattern=on 0pt off 1.3\pgflinewidth]  (a\x) -- (a\y);}

\foreach \x/\y in 
{1/2,4/3}
{\draw[decorate,decoration={snake, amplitude=1pt, segment length=8pt}]
(a\x) -- (a\y);}

\foreach \x/\y in 
{5/1,6/2,7/3,8/4}
{\path[edge,dotted] (a\x)--(a\y);}
\end{scope}

\begin{scope}[shift={(-14,4.5)}]
\begin{scope}[shift={(-5,0)}]
\foreach \x/\y/\z in
{1.5/1.5/1,1.5/-1.5/2,-1.5/-1.5/3,-1.5/1.5/4,3/3/5,3/-3/6,-3/-3/7,-3/3/8}
{\node[verti] (a\z) at (\x,\y){};}

\foreach \x/\y/\z in
{1.1/2/11,1.1/-2/8,-1.1/-2/16,-1.1/2/1,2.6/3.5/2,2.6/-3.5/14,-2.6/-3.5/3,-2.6/3.5/13}
{\node[ver] () at (\x,\y){\tiny{$T_{\z}^3$}};}

\foreach \x/\y in 
{1/4,2/3,6/7,8/5}
{\path[edge] (a\x)--(a\y);}

\foreach \x/\y in 
{1/4,2/3,6/7,8/5}
{\draw [line width=2pt, line cap=round, dash pattern=on 0pt off 1.3\pgflinewidth]  (a\x) -- (a\y);}

\foreach \x/\y in 
{1/2,4/3,6/5,8/7}
{\draw[decorate,decoration={snake, amplitude=1pt, segment length=8pt}]
(a\x) -- (a\y);}

\foreach \x/\y in 
{5/1,6/2,7/3,8/4}
{\path[edge,dashed] (a\x)--(a\y);}

\end{scope}

\begin{scope}[shift={(5,0)}]
\foreach \x/\y/\z in
{1.5/1.5/1,1.5/-1.5/2,-1.5/-1.5/3,-1.5/1.5/4,3/3/5,3/-3/6,-3/-3/7,-3/3/8}
{\node[verti] (a\z) at (\x,\y){};}

\foreach \x/\y/\z in
{1.1/2/T_{15}^3,1.1/-2/T_{5}^3,-1.1/-2/T_{12}^3,-1.1/2/T_{4}^3,2.6/3.5/T_{7}^1,2.6/-3.5/T_{10}^1,-2.6/-3.5/T_{6}^1,-2.6/3.5/T_{9}^1}
{\node[ver] () at (\x,\y){\tiny{$\z$}};}

\foreach \x/\y in 
{1/4,2/3}
{\path[edge] (a\x)--(a\y);}

\foreach \x/\y in 
{1/4,2/3}
{\draw [line width=2pt, line cap=round, dash pattern=on 0pt off 1.3\pgflinewidth]  (a\x) -- (a\y);}

\foreach \x/\y in 
{1/2,4/3,6/5,8/7}
{\draw[decorate,decoration={snake, amplitude=1pt, segment length=8pt}]
(a\x) -- (a\y);}

\foreach \x/\y in 
{5/1,6/2,7/3,8/4}
{\path[edge,dashed] (a\x)--(a\y);}
\end{scope}

\end{scope}

\begin{scope}[shift={(6,6.5)}]
\foreach \x/\y/\z in
{1.5/1.5/1,1.5/-1.5/2,-1.5/-1.5/3,-1.5/1.5/4,3/3/5,3/-3/6,-3/-3/7,-3/3/8,4.5/4.5/9,4.5/-4.5/10,-4.5/-4.5/11,-4.5/4.5/12,6/6/13,6/-6/14,-6/-6/15,-6/6/16}
{\node[verti] (a\z) at (\x,\y){};}

\foreach \x/\y/\z in
{1.1/2/11,1.1/-2/5,-1.1/-2/12,-1.1/2/1,2.6/3.5/3,2.6/-3.5/9,-2.6/-3.5/7,-2.6/3.5/14,4.1/5/16,4.1/-5/4,-4.1/-5/15,-4.1/5/8,5.6/6.5/2,5.6/-6.5/10,-5.6/-6.5/6,-5.6/6.5/13}
{\node[ver] () at (\x,\y){\tiny{$T_{\z}^4$}};}

\foreach \x/\y in 
{1/4,2/3,6/7,8/5,10/11,12/9,14/15,16/13}
{\path[edge] (a\x)--(a\y);}

\foreach \x/\y in 
{5/9,6/10,7/11,8/12}
{\path[edge,dashed] (a\x)--(a\y);}

\foreach \x/\y in 
{1/4,2/3,6/7,8/5,10/11,12/9,14/15,16/13}
{\draw [line width=2pt, line cap=round, dash pattern=on 0pt off 1.3\pgflinewidth]  (a\x) -- (a\y);}

\foreach \x/\y in 
{1/2,4/3,6/5,8/7,10/9,12/11,14/13,16/15}
{\draw[decorate,decoration={snake, amplitude=1pt, segment length=8pt}]
(a\x) -- (a\y);}

\foreach \x/\y in 
{5/1,6/2,7/3,8/4,9/13,10/14,11/15,12/16}
{\path[edge] (a\x)--(a\y);}

\draw[edge,dashed] plot [smooth,tension=0.5] coordinates{(a1) (4.2,2.5) (a13)};

\draw[edge,dashed] plot [smooth,tension=0.5] coordinates{(a2) (4.2,-2.5) (a14)};

\draw[edge,dashed] plot [smooth,tension=0.5] coordinates{(a3) (-4.2,-2.5) (a15)};

\draw[edge,dashed] plot [smooth,tension=0.5] coordinates{(a4) (-4.2,2.5) (a16)};

\end{scope}

\begin{scope}[shift={(-4,11)}]
\foreach \x/\y/\z/\w in 
{-1/1/T_i^1/1,2/1/T_i^2/2,2/-1.5/T_i^4/4,-1/-1.5/T_i^3/3}
{\node[ver] (\w) at (\x,\y){\tiny{$\z$}};}

\path[edge,dashed] (1)--(2);
\path[edge] (2)--(3);
\path[edge,dotted] (3)--(4);

\end{scope}
\node[ver] () at (-12,-0.4){$(a)$}; 
\end{scope}

\begin{scope}[shift={(0,15.5)}]
\begin{scope}[shift={(-13,16)}]
\foreach \x/\y/\z in
{1.5/1.5/1,1.5/-1.5/2,-1.5/-1.5/3,-1.5/1.5/4,3/3/5,3/-3/6,-3/-3/7,-3/3/8,4.5/4.5/9,4.5/-4.5/10,-6/-4.5/11,-6/4.5/12,6/6/13,6/-6/14,-7.5/-6/15,-7.5/6/16,-9/-6/17,-9/6/18,-4.5/-3/19,-4.5/3/20}
{\node[verti] (a\z) at (\x,\y){};}

\foreach \x/\y/\z in
{1.1/2/T_{12}^1,1.1/-2/T_4^1,-1.1/-2/T_{8}^2,-1.1/2/T_{11}^2,2.6/3.5/T_{7}^1,2.6/-3.5/T_{10}^1,-2.6/-3.5/T_{13}^2,-2.6/3.5/T_{3}^2,4.1/5/T_{15}^1,4.1/-5/T_{5}^1,-5.6/-5/T_{11}^1,-5.6/5/T_{8}^1,5.6/6.5/T_{6}^1,5.6/-6.5/T_{9}^1,-7.1/-6.5/T_{3}^1,-7.1/6.5/T_{13}^1,-9.7/-6/T_{2}^4,-9.7/6/T_1^3,-3.8/-2.8/T_2^3,-3.8/2.8/T_{1}^4}
{\node[ver] () at (\x,\y){\tiny{$\z$}};}

\foreach \x/\y in 
{5/9,6/10,19/11,20/12,1/4,2/3,6/7,8/5,10/11,12/9,14/15,16/13,15/17,16/18}
{\path[edge] (a\x)--(a\y);}

\foreach \x/\y in 
{1/4,2/3,6/7,8/5,10/11,12/9,14/15,16/13}
{\draw [line width=2pt, line cap=round, dash pattern=on 0pt off 1.3\pgflinewidth]  (a\x) -- (a\y);}

\foreach \x/\y in 
{1/2,4/3,6/5,8/7,10/9,12/11,14/13,16/15}
{\draw[decorate,decoration={snake, amplitude=1pt, segment length=8pt}]
(a\x) -- (a\y);}

\foreach \x/\y in 
{5/1,6/2,7/3,8/4,9/13,10/14,11/15,12/16}
{\path[edge,dotted] (a\x)--(a\y);}

\draw[edge] plot [smooth,tension=0.5] coordinates{(a1) (4.2,2.5) (a13)};

\draw[edge] plot [smooth,tension=0.5] coordinates{(a2) (4.2,-2.5) (a14)};

\end{scope}

\begin{scope}[shift={(5.5,18.5)}]

\foreach \x/\y/\z in
{1.5/1.5/1,1.5/-1.5/2,-1.5/-1.5/3,-1.5/1.5/4,3/3/5,3/-3/6,-3/-3/7,-3/3/8}
{\node[verti] (a\z) at (\x,\y){};}

\foreach \x/\y/\z in
{1.1/2/T_{15}^2,1.1/-2/T_{5}^2,-1.1/-2/T_{12}^2,-1.1/2/T_{4}^2,2.6/3.5/T_{6}^4,2.6/-3.5/T_{9}^4,-2.6/-3.5/T_{7}^4,-2.6/3.5/T_{10}^4}
{\node[ver] () at (\x,\y){\tiny{$\z$}};}

\foreach \x/\y in 
{1/4,2/3,6/7,8/5}
{\path[edge] (a\x)--(a\y);}

\foreach \x/\y in 
{1/4,2/3,6/7,8/5}
{\draw [line width=2pt, line cap=round, dash pattern=on 0pt off 1.3\pgflinewidth]  (a\x) -- (a\y);}

\foreach \x/\y in 
{1/2,4/3}
{\draw[decorate,decoration={snake, amplitude=1pt, segment length=8pt}]
(a\x) -- (a\y);}

\foreach \x/\y in 
{5/1,6/2,7/3,8/4}
{\path[edge,dotted] (a\x)--(a\y);}
\end{scope}

\begin{scope}[shift={(-14,4.5)}]

\foreach \x/\y/\z in
{1.5/1.5/1,1.5/-1.5/2,-1.5/-1.5/3,-1.5/1.5/4,3/3/5,3/-3/6,-3/-3/7,-3/3/8}
{\node[verti] (a\z) at (\x,\y){};}

\foreach \x/\y/\z in
{1.1/2/T_{15}^3,1.1/-2/T_{5}^3,-1.1/-2/T_{12}^3,-1.1/2/T_{4}^3,2.6/3.5/T_{7}^1,2.6/-3.5/T_{10}^1,-2.6/-3.5/T_{6}^1,-2.6/3.5/T_{9}^1}
{\node[ver] () at (\x,\y){\tiny{$\z$}};}

\foreach \x/\y in 
{1/4,2/3}
{\path[edge] (a\x)--(a\y);}

\foreach \x/\y in 
{1/4,2/3}
{\draw [line width=2pt, line cap=round, dash pattern=on 0pt off 1.3\pgflinewidth]  (a\x) -- (a\y);}

\foreach \x/\y in 
{1/2,4/3,6/5,8/7}
{\draw[decorate,decoration={snake, amplitude=1pt, segment length=8pt}]
(a\x) -- (a\y);}

\foreach \x/\y in 
{5/1,6/2,7/3,8/4}
{\path[edge,dashed] (a\x)--(a\y);}

\end{scope}

\begin{scope}[shift={(6,5)}]
\foreach \x/\y/\z in
{1.5/1.5/1,1.5/-1.5/2,-1.5/-1.5/3,-1.5/1.5/4,3/4.5/5,3/-3/6,-3/-3/7,-3/4.5/8,4.5/6/9,4.5/-4.5/10,-4.5/-4.5/11,-4.5/6/12,6/7.5/13,6/-6/14,-6/-6/15,-6/7.5/16,1.5/3/17,-1.5/3/18,6/9/19,-6/9/20}
{\node[verti] (a\z) at (\x,\y){};}

\foreach \x/\y/\z in
{1/2/T_{11}^4,1.1/-2/T_{5}^4,-1.1/-2/T_{12}^4,-1.1/2/T_{1}^4,2.6/5/T_{13}^3,2.6/-3.5/T_{9}^4,-2.6/-3.5/T_{7}^4,-2.6/5/T_{2}^3,4.1/6.5/T_{1}^3,4.1/-5/T_{4}^4,-4.1/-5/T_{15}^4,-4.1/6.5/T_{11}^3,5.5/8/T_{2}^4,5.6/-6.5/T_{10}^4,-5.6/-6.5/T_{6}^4,-5.4/8/T_{13}^4,1.1/3.5/T_{3}^2,-1.1/3.5/T_{8}^1,5.6/9.5/T_{3}^1,-5.6/9.5/T_{8}^2}
{\node[ver] () at (\x,\y){\tiny{$\z$}};}

\foreach \x/\y in 
{1/4,2/3,6/7,8/5,10/11,12/9,14/15,16/13}
{\path[edge] (a\x)--(a\y);}

\foreach \x/\y in 
{5/9,6/10,7/11,8/12}
{\path[edge,dashed] (a\x)--(a\y);}

\foreach \x/\y in 
{1/4,2/3,6/7,8/5,10/11,12/9,14/15,16/13}
{\draw [line width=2pt, line cap=round, dash pattern=on 0pt off 1.3\pgflinewidth]  (a\x) -- (a\y);}

\foreach \x/\y in 
{1/2,4/3,6/5,8/7,10/9,12/11,14/13,16/15}
{\draw[decorate,decoration={snake, amplitude=1pt, segment length=8pt}]
(a\x) -- (a\y);}

\foreach \x/\y in 
{17/1,6/2,7/3,18/4,19/13,10/14,11/15,20/16}
{\path[edge] (a\x)--(a\y);}

\draw[edge,dashed] plot [smooth,tension=0.5] coordinates{(a1) (4.2,2.5) (a13)};

\draw[edge,dashed] plot [smooth,tension=0.5] coordinates{(a2) (4.2,-2.5) (a14)};

\draw[edge,dashed] plot [smooth,tension=0.5] coordinates{(a3) (-4.2,-2.5) (a15)};

\draw[edge,dashed] plot [smooth,tension=0.5] coordinates{(a4) (-4.2,2.5) (a16)};

\end{scope}

\begin{scope}[shift={(-5,7)}]
\foreach \x/\y/\z/\w in 
{-1/1/T_i^1/1,2/1/T_i^2/2,2/-1.5/T_i^4/4,-1/-1.5/T_i^3/3}
{\node[ver] (\w) at (\x,\y){\tiny{$\z$}};}

\path[edge,dashed] (1)--(2);
\path[edge] (2)--(3);
\path[edge,dotted] (3)--(4);

\end{scope}
\node[ver] () at (-12,-0.6){$(b)$ }; 
\end{scope}

\begin{scope} [shift = {(-15,13)}]
\foreach \x/\y/\z in {1/-0.5/0,5/-0.5/1,9/-0.5/2,13/-0.5/3,17/-0.5/4}
{\node[ver] () at (\x,\y){$\z$};}
\path[edge,dashed] (12,0) -- (14,0);
\path[edge] (16,0) -- (18,0);
\draw [line width=2pt, line cap=round, dash pattern=on 0pt off 1.3\pgflinewidth]  (16,0) -- (18,0);
\path[edge] (8,0) -- (10,0);
\draw[decorate,decoration={snake, amplitude=1pt, segment length=8pt}] (0,0) -- (2,0);

\path[edge,dotted] (4,0) -- (6,0);
\end{scope}  

\end{tikzpicture}

\caption{$(a)$ The gem $\Gamma^2$ of $\mathbb{RP}^3\times \mathbb{S}^1$ with $48$ vertices, $(b)$ The crystallization $\Gamma^1$ of $\mathbb{RP}^3\times \mathbb{S}^1$ with $40$ vertices.}\label{fig2}

\end{figure}

\begin{lemma}\label{lemma:D-J} There are $1+2^{n-1}$ small covers over $\Delta^{n-1}\times I$ up to D-J equivalence, where $n\ge 3$.
\end{lemma}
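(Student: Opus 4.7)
The approach is to classify $\mathbb{Z}_2$-characteristic functions $\lambda : \mathcal{F}(\Delta^{n-1}\times I) \to \Z$ up to the $GL(n,\mathbb{Z}_2)$-action $\lambda\mapsto\theta\circ\lambda$. Let $e_1,\ldots,e_n$ denote the standard basis of $\Z$, and label the $n+2$ facets as the side facets $F_0,\ldots,F_{n-1}$ together with the two end facets $F_n, F_{n+1}$. Using the vertex-facet incidences described just before Example~\ref{ex}, the basis conditions at the bottom vertices $v_i$ (involving $F_n$) and top vertices $w_{i+1}$ (involving $F_{n+1}$) jointly translate to: (i) every $n-1$ of the side vectors $\lambda(F_0),\ldots,\lambda(F_{n-1})$ are linearly independent; and (ii) each of $\lambda(F_n),\lambda(F_{n+1})$ lies outside the span of every such $(n-1)$-subset.

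A short linear-algebra dichotomy (using that a rank-$(n-1)$ collection of $n$ vectors in $\Z$ has a one-dimensional space of linear dependences, which therefore forces all $n$ vectors to participate in the unique nontrivial relation) shows (i) admits exactly one of:
(A) $\lambda(F_0),\ldots,\lambda(F_{n-1})$ form a basis of $\Z$; or
(B) they span a hyperplane $H$ and satisfy the single relation $\sum_{i=0}^{n-1}\lambda(F_i)=0$.

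In Case (A), apply $\theta\in GL(n,\mathbb{Z}_2)$ to normalize $\lambda(F_i)=e_{i+1}$ for $0 \le i \le n-1$, which exhausts the full $GL$-freedom. Condition (ii) at the vertex $v_i$ then forces the $e_{i+1}$-coordinate of $\lambda(F_n)$ to be $1$, and similarly for $\lambda(F_{n+1})$; ranging over $i$ pins down $\lambda(F_n)=\lambda(F_{n+1})=e_1+\cdots+e_n$, yielding exactly one D-J class. In Case (B), normalize $\lambda(F_i)=e_{i+1}$ for $0\le i\le n-2$, whence the relation forces $\lambda(F_{n-1})=e_1+\cdots+e_{n-1}$. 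The residual $GL$-freedom is the $2^{n-1}$-element subgroup of transformations $T$ with $T(e_i)=e_i$ for $1\le i\le n-1$ and $T(e_n)=c+e_n$, $c\in H:=\langle e_1,\ldots,e_{n-1}\rangle$. Since $\lambda(F_n)\notin H$ by (ii), write $\lambda(F_n)=h+e_n$ and apply this residual action with $c=h$ to reduce to $\lambda(F_n)=e_n$; the stabilizer of the fully normalized data is then trivial. Consequently $\lambda(F_{n+1})=h'+e_n$ with $h'\in H$ remains a free parameter, giving $2^{n-1}$ pairwise non-D-J-equivalent characteristic functions (each satisfying the remaining basis conditions at the $w_{i+1}$'s by a routine verification).

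Summing the two cases yields $1+2^{n-1}$ D-J equivalence classes. The main subtlety is the orbit-uniqueness step in Case (B): showing that after fully normalizing $(\lambda(F_0),\ldots,\lambda(F_n))$ the stabilizer in $GL(n,\mathbb{Z}_2)$ collapses to the identity, so that distinct $h'\in H$ indeed produce distinct D-J orbits. This rests on the exact size match between the residual $GL$-freedom ($2^{n-1}$ elements) and the number of possible values of $\lambda(F_n)$ in the coset $H + e_n$.
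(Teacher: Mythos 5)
Your proposal is correct and follows essentially the same route as the paper: the dichotomy between your Cases (A) and (B) is exactly the paper's ``Second Method'' versus ``First Method'' split for the side-facet vectors, and the count of $1+2^{n-1}$ classes arises the same way. The only difference is one of presentation: your explicit computation of the residual $GL(n,\mathbb{Z}_2)$-stabilizer in Case (B) carefully justifies the orbit count that the paper states more tersely (``corresponding to a fixed basis $\beta$ \dots exactly one choice by the second method and $2^{n-1}$ choices by the first method'').
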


\begin{proof}
Let us consider the vertex $v_0$. The vertex $v_0$ belongs to precisely $n$ codimension-one faces $F_1,F_2,\dots,F_n$. Let us denote the $\mathbb Z_2$-characteristic vector of $F_j$ by $b_j$ for all $0\le j\le n+1$. By the definition of $\mathbb Z_2$-characteristic function, $\{b_1,b_2,\dots,b_n\}$ is a basis of $\Z$. Now, if we consider the vertex $v_k$, since it lies in exactly $n$ codimension-one faces $F_0,F_1,\dots,F_{k-1},F_{k+1},\dots,F_n$, the set of $\mathbb Z_2$-characteristic vectors $\{b_0,b_1,\dots,b_{k-1},\hat{b}_k,b_{k+1},\dots,b_n\}$ also forms a basis of $\Z$ for all $1\le k \le n-1$. This implies $b_0$ can either be $b_1+b_2+\dots+b_{n-1}$ or $b_1+b_2+\dots+b_{n}$. If $b_0=b_1+b_2+\dots+b_{n-1}$, then $b_{n+1}$ can be any vector that is not spanned by $\{b_j:0\le j \le n-1\}$. Thus, in this case, $b_{n+1}$ has $2^{n-1}$ choices. Now, if $b_0=b_1+b_2+\dots+b_{n}$, then $\{b_j\ |\ 0\le j\le n-1\}$ is a basis of $\Z$. Since $\{b_0,\dots,b_{k-1}, \hat{b}_k, b_{k+1},\dots b_{n-1},b_{n+1}\}$ forms a basis of $\Z$ for all $0\le k\le n-1$, we get that $b_{n+1}=\sum_{j=0}^{n-1} b_j$, i.e., $b_{n+1}=b_n$. Therefore, a $\mathbb Z_2$-characteristic function on $\mathcal F(\Delta^{n-1}\times I)$ is obtained by the following two methods. 

\smallskip

\noindent \textbf{First Method:} The first method is to assign $n-1$ linearly independent vectors to any $n-1$ faces of the set $\{F_j\ |\ 0\le j\le n-1\}$ and assign the sum of these $n-1$ linearly independent vectors to the remaining face of this set, then each of $F_n$ and $F_{n+1}$ can be assigned any vector that is not in the span of these $n-1$ linearly independent vectors. 

\smallskip

\noindent \textbf{Second Method:} The second method is to assign $n$ linearly independent vectors to $\{F_j\ |\ 0\le j\le n-1\}$, then as proved above, $b_n=b_{n+1}=\sum_{j=0}^{n-1} b_j$.

\smallskip

\noindent By simple calculation, one gets that the number of different $\mathbb Z_2$-characteristic functions that are obtained via the first method is $2^{2n-2}(2^n-1)(2^n-2)\dots(2^n-2^{n-2})$ and that are obtained using the second method is $(2^n-1)(2^n-2)\dots(2^n-2^{n-1})$. Therefore, corresponding to a fixed basis $\beta$ of $\Z$, we have exactly one choice by the second method and $2^{n-1}$ choices by the first method for $\lambda$. Since two small covers $M^n(\lambda_1)$ and $M^n(\lambda_2)$ are D-J equivalent if and only if there exists an automorphism $\theta$ of $\Z$ such that $\lambda_2=\theta \circ \lambda_1$, we conclude that there are $1+2^{n-1}$ D-J equivalence classes of small covers over $\Delta^{n-1}\times I$, where $n\ge 3$.
\end{proof}

 Let $\Delta^n_1=[v_0^1,v_1^1,\cdots,v_n^1]$ and $\Delta^n_2=[v_0^2,v_1^2,\cdots,v_n^2]$ be two $n$-simplices. For each $j\in \{1,2\}$, let $\lambda_j:\mathcal F(\Delta^n_j)\to \Z$ be a $\mathbb{Z}_2$-characteristic function defined by $\lambda_j([v_0^j,\cdots,v_{i-1}^j,v_{i+1}^j,\cdots,v_n^j])=b_i$, for all $1\le i\le n$, where $\{b_1,b_2,\cdots,b_n\}$ is a basis of $\Z$ (cf. Subsection \ref{3.1}). Fix an index $i\in \{0,1,\dots,n\}$. Let  $\Delta^n_1 \# \Delta^n_2$ be formed by removing a small ball around the vertex $v_i^j$ from $\Delta^n_j$, for each $j\in\{1,2\}$, and gluing the results together. Then $\Delta^n_1 \# \Delta^n_2$ is homeomorphic to $\Delta^{n-1}\times I$. Note that \(\Delta^{n-1} \times I\) (\(\cong \Delta^n_1 \# \Delta^n_2\)) inherits a \(\mathbb{Z}_2\)-characteristic function that satisfies the conditions of the second method, and conversely, a \(\mathbb{Z}_2\)-characteristic function on \(\Delta^{n-1} \times I\) that satisfies the conditions of the second method corresponds to one on \(\Delta^n_1 \# \Delta^n_2\). Therefore, the small cover $M^n(\lambda)$ over $\Delta^{n-1}\times I$ is $\mathbb{RP}^n \# \mathbb{RP}^n$ whenever $\lambda$ is a $\mathbb{Z}_2$-characteristic function satisfying the conditions of the second method. For details, see \cite[1.11]{dj91}. Now, we will focus on studying the small covers over $\Delta^{n-1}\times I$ when $\lambda$ satisfies the conditions of the first method.

\begin{lemma}\label{on}
Let $\lambda:\mathcal F(\Delta^{n-1}\times I)\to \Z$ be a $\mathbb Z_2$-characteristic function satisfying the conditions of the first method, where $n\ge 3$. If $n$ is odd, then the small cover $M^n(\lambda)$ over $\Delta^{n-1}\times I$ is non-orientable. When $n$ is even, $M^n(\lambda)$ is orientable if $b_{n+1}$ is the sum of $b_n$ and an even number of $b_i$, and $M^n(\lambda)$ is non-orientable if $b_{n+1}$ is the sum of $b_n$ and an odd number of $b_i$, where $1\le i \le n-1$.
\end{lemma}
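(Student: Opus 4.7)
The approach is to apply the standard criterion recalled in Subsection~\ref{crystal}: a gem of a closed PL manifold $M$ is bipartite if and only if $M$ is orientable. So I need only decide when the gem $(\Gamma,\gamma)$ of $M^n(\lambda)$ constructed above is bipartite.

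The first step is to describe the edges of $\Gamma$ explicitly. Inside a single copy $t_i$, the internal face-identifications link the simplices $t_i^1,\dots,t_i^n$ into a path $T_i^1, T_i^2, \ldots, T_i^n$, with the edge between $T_i^j$ and $T_i^{j+1}$ carrying color $n-j$, since these two simplices differ only in the color-$(n-j)$ vertex $v_{n-j}^i$ vs.\ $w_{n-j}^i$. Because each $t_i^j$ has a ``staircase'' vertex set, with $v$-indices filling the initial segment $\{0,\dots,n-j\}$ and $w$-indices filling the terminal segment $\{n-j+1,\dots,n\}$, a case check of the boundary facets of $t_i^j$ shows that external identifications preserve the index~$j$: if a facet of $t_i^j$ lies on the polytope face $F_l$, then it is identified with the corresponding facet of $t_{i'}^{j}$ where $g_{i'}=g_i+b_l$. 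Moreover, for every $l\in\{0,1,\dots,n+1\}$ and every $g\in\mathbb{Z}_2^n$, at least one external edge on $F_l$ emanates from the copy with $g_i=g$.

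Any alternating $2$-coloring $\epsilon$ of $\Gamma$ must then take the form $\epsilon(T_i^j)\equiv j+c(g_i)\pmod{2}$ by the internal paths, and the external edges translate to
\[
c(g+b_l)\equiv c(g)+1\pmod{2}\qquad\text{for every } g\in\mathbb{Z}_2^n \text{ and every } l\in\{0,\dots,n+1\}.
\]
Normalizing $c(0)=0$, this forces $c(b_{l_1}+\cdots+b_{l_r})\equiv r\pmod{2}$, so $c$ is well-defined precisely when the map ``number of summands mod $2$'' vanishes on every additive relation among $b_0,\dots,b_{n+1}$. The space of such relations (the kernel of the map $\mathbb{F}_2^{n+2}\to\mathbb{Z}_2^n$ sending the $l$-th basis vector to $b_l$) has dimension $2$; it is generated by the first-method relation $b_0+b_1+\cdots+b_{n-1}=0$ of length $n$, together with the relation $b_n+b_{n+1}+\sum_{i\in S}b_i=0$ of length $2+|S|$ coming from the unique expression $b_{n+1}=b_n+\sum_{i\in S}b_i$ with $S\subseteq\{1,\dots,n-1\}$ (unique because $\{b_1,\dots,b_{n-1},b_n\}$ is a basis of $\mathbb{Z}_2^n$ and $b_{n+1}\notin\operatorname{span}(b_1,\dots,b_{n-1})$).

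Thus $(\Gamma,\gamma)$ is bipartite if and only if both $n$ and $|S|$ are even. If $n$ is odd the first relation yields $0=c(0)\equiv n\pmod{2}$, a contradiction, so $M^n(\lambda)$ is non-orientable; if $n$ is even, orientability of $M^n(\lambda)$ is equivalent to $|S|$ being even, which is exactly the stated criterion. The main obstacle is Step~1, the combinatorial verification that the external identifications of the staircase triangulation of $\Delta^{n-1}\times I$ preserve the simplex-index $j$ across copies; once this structural fact is in hand, the remainder is linear algebra over $\mathbb{F}_2$.
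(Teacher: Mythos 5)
Your argument is correct, and it reaches the conclusion by a route that differs in substance from the paper's. The paper also reduces everything to bipartiteness of the gem, but it gets there through the structural fact $(\ast)$: deleting the $n$-colored edges at the $T_i^1$ and the $0$-colored edges at the $T_i^n$ splits $\Gamma$ into two subgraphs $\Gamma_1,\Gamma_2$ representing $\mathbb{RP}^{n-1}\times I$. For odd $n$ it then simply cites the non-orientability of $\mathbb{RP}^{n-1}\times I$; for even $n$ it takes the bipartition of an intermediate graph $\Gamma_3$ (again representing the orientable $\mathbb{RP}^{n-1}\times I$) as given and only checks whether the remaining $0$-colored edges $T_k^nT_l^n$ with $g_k+b_{n+1}=g_l$ respect it, which is where the parity of $|S|$ enters. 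You instead bypass the topological input entirely: by pinning down all edges of $\Gamma$ (internal paths $T_i^1,\dots,T_i^n$ with colors $n-j$, and external edges preserving the index $j$ and shifting $g_i$ by $b_l$), you show any proper $2$-coloring must be $\epsilon(T_i^j)\equiv j+c(g_i)$ and reduce orientability to the all-ones vector annihilating the $2$-dimensional kernel of $\mathbb{F}_2^{n+2}\to\mathbb{Z}_2^n$, whose two generators have weights $n$ and $2+|S|$. Your version is more self-contained and treats the odd and even cases uniformly; it also makes explicit the correspondence between the bipartition class of $T_i^j$ and the pair $(j,g_i)$, which the paper's even-$n$ case uses implicitly when it asserts that the sign of $T_k^n$ versus $T_l^n$ is governed by the parity of $\{j\mid c_j=1\}$. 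The paper's version is shorter because it outsources the odd case and the bipartiteness of $\Gamma_3$ to known facts about $\mathbb{RP}^{n-1}$. The one step you defer --- that a boundary facet of $t_i^j$ lying in $F_l$ is identified with the facet of $t_{i'}^{j}$ with the \emph{same} index $j$, where $g_{i'}=g_i+b_l$ --- does go through exactly as you indicate (the $v/w$ pattern of a staircase simplex determines $j$, and the gluing over $F_l$ is the identity on vertex labels), but it should be written out, since it is the load-bearing combinatorial fact replacing the paper's $(\ast)$.
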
 
\begin{proof}
We know that a gem $\Gamma$ of a closed manifold $M$ is bipartite if and only if $M$ is orientable. If \(\Gamma\) is bipartite, we designate the vertices of one set as positive vertices and the vertices of the other set as negative vertices. Let $\lambda:\mathcal F(\Delta^{n-1}\times I)\to \Z$ be a $\mathbb Z_2$-characteristic function satisfying the conditions of the first method, and let $(\Gamma,\gamma)$ be the gem of $M^n(\lambda)$ that is obtained using the above construction. 
\begin{equation}\nonumber
(\ast) = \begin{cases}
\begin{array}{l}
\text{If we remove all the $n$-colored edges incident to $T_i^1$ and all the $0$-colored edges} \\
\text{incident to $T_i^n$, where $1 \le i \le 2^n$, then we get two connected components of $\Gamma$} \\
\text{with $n2^{n-1}$ vertices in each component.}
\end{array}
\end{cases}
\end{equation}

Let us denote these components by $G_1$ and $G_2$. Since we are not considering $\mathbb{Z}_2$-characteristic vectors corresponding to $(n-1)$-faces $\Delta^{n-1}\times \partial(I)$ of $\Delta^{n-1}\times I$, the components $G_1$ and $G_2$ both represent $\mathbb{RP}^{n-1}\times I$, and hence $\Gamma$ represents an $\mathbb{RP}^{n-1}$-bundle over $\mathbb{S}^{1}$ (cf. Proposition \ref{Proposition: product} and Figure \ref{table}).
Let $n$ be an odd number. By the fact $(\ast)$, we have that $G_1$ and $G_2$ both represent $\mathbb{RP}^{n-1}\times I$, which is non-orientable. Therefore, $M^n(\lambda)$ is non-orientable.

Now, let us assume that $n$ is an even number. If we connect $G_1$ and $G_2$ by all the $n$-colored edges of $\Gamma$ that are incident to $T_i^1$, where $1\le i\le 2^n$, then we get $G_3$ that represents $\mathbb{RP}^{n-1}\times I$, which is orientable. Thus, $G_3$ is bipartite. Now, $T_k^n$ is joined with $T_l^n$ by an edge of color $0$ if and only if $g_k+b_{n+1}=g_l$, i.e., $g_k+b_n+\sum_{j=1}^{n-1}c_j b_j=g_l$, where $c_j=1$ or $0$. If the cardinality of the set $\{j\ |\ 1\le j\le n-1, c_j=1\}$ is odd, then $T_k^n$ and $T_l^n$ both are either positive or negative vertices. This implies that $M^n(\lambda)$ is non-orientable. Now, if the cardinality of the set $\{j\ |\ 1\le j\le n-1, c_j=1\}$ is even, then one of $T_k^n$ and $T_l^n$ is a positive vertex, and the other is a negative vertex, which implies that $M^n(\lambda)$ is orientable.
\end{proof}

\begin{remark}\label{remark: count}
{\rm There are $2^{2n-3}(2^n-1)(2^n-2)\dots(2^n-2^{n-2})$ different $\mathbb Z_2$-characteristic functions (obtained using the first method) that induce orientable small covers, and the same number of $\mathbb Z_2$-characteristic functions (obtained using the first method) induce non-orientable small covers. In conclusion, we get $2^{n-2}$ orientable (resp. non-orientable) small covers up to D-J equivalence, when $n$ is even. 
}
 \end{remark}

 Since the $\mathbb Z_2$-characteristic function in Example \ref{ex} satisfies the conditions of the first method, the component $G_1$ consists of vertices $T_1^j, T_{14}^j, T_{8}^j, T_{13}^j, T_{16}^j, T_{2}^j, T_{11}^j,$ and $T_{3}^j$; the other component $G_2$ consists of $T_6^j, T_{15}^j, T_{7}^j, T_{12}^j, T_{4}^j, T_{10}^j,$ $ T_{5}^j,$ and $T_{9}^j$, where $1\le j\le 4$. Since $n=4$ and $b_{n+1}=b_n$, the small cover $M^4(\lambda)$ is an orientable manifold.

\begin{theorem}\label{main}
    Let $\lambda$ be a $\mathbb Z_2$-characteristic function of $\Delta^{n-1} \times I$ obtained by the first method, where $n\ge 3$. Then the corresponding small cover $M^n(\lambda)$ admits a $2^{n-1}(n+1)$-vertex crystallization $(\Gamma^{\prime},\gamma^{\prime})$. Furthermore, the regular genus of $\Gamma^{\prime}$ is $2$ for $n = 3$, and $1 + 2^{n-4}(n^2 - 2n - 3)$ for $n \geq 4$.
\end{theorem}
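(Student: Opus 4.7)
The plan is to prove the theorem in two steps: first, explicitly construct $(\Gamma', \gamma')$ by applying a sequence of polyhedral glue moves to the gem $(\Gamma, \gamma)$ from the Construction on $n \cdot 2^n$ vertices; second, evaluate the regular genus by choosing a cyclic permutation of colors and counting bicolored cycles.

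For the construction, I would exploit the observation that within each copy $t_i$ the vertices $T_i^1, T_i^2, \ldots, T_i^n$ form a path with consecutive edge colors $n-1, n-2, \ldots, 1$, while the colors $0$ and $n$ only appear on external edges coming from the identifications on $F_0, F_n$, and $F_{n+1}$. Since $g_{\widehat{c}} = 2$ for $1 \le c \le n-1$, these are precisely the colors at which contractedness fails. For each intermediate color $c$, I would apply a polyhedral glue move along a pair of isomorphic subgraphs $A_1, A_2$, each generating an $n$-ball, that lie in the two components of $\Gamma_{\widehat{c}}$ and are joined by parallel color-$c$ edges. A natural concrete choice is governed by the involutions $i \mapsto i + b_c$ (addition in $\Z$), matching a portion of $t_i$ with the corresponding portion of $t_{i+b_c}$. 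Choosing the sizes of $\Lambda_1, \Lambda_2$ consistently (as illustrated for $n=4$ in Figure \ref{fig1}) removes exactly $(n-1)\cdot 2^{n-1} = n \cdot 2^n - 2^{n-1}(n+1)$ vertices, yielding $\Gamma'$ with the required $2^{n-1}(n+1)$ vertices. Its contractedness at each color follows because the move on color $c$ merges the two components of $\Gamma_{\widehat{c}}$, and the moves for distinct colors do not reintroduce any disconnection.

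For the regular genus, I would take the cyclic permutation $\varepsilon = (0, 1, 2, \ldots, n)$ of $\Delta_n$ and compute each $g_{\{\varepsilon_i, \varepsilon_{i+1}\}}(\Gamma')$ by tracing how the color-$\varepsilon_i$/color-$\varepsilon_{i+1}$ cycles of $\Gamma$ evolve through the successive glue moves. Summing the $n+1$ contributions should give $\sum_i g_{\{\varepsilon_i, \varepsilon_{i+1}\}}(\Gamma') = 2^{n-3}(n+1)^2$. Substituting this together with $V(\Gamma') = 2^{n-1}(n+1)$ into $\chi_\varepsilon(\Gamma') = \sum_i g_{\{\varepsilon_i, \varepsilon_{i+1}\}} + (1-n)V(\Gamma')/2$ produces $\chi_\varepsilon(\Gamma') = -2^{n-3}(n^2 - 2n - 3)$, whence $\rho_\varepsilon(\Gamma') = 1 - \chi_\varepsilon(\Gamma')/2 = 1 + 2^{n-4}(n^2 - 2n - 3)$ for $n \ge 4$. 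The case $n = 3$ is settled by direct calculation giving $\rho_\varepsilon(\Gamma') = 2$. To conclude this is in fact the regular genus $\rho(\Gamma')$, I would argue that no other cyclic permutation yields a larger Euler characteristic: the symmetry inherited from the $\Z$-action ensures the bicolored cycle counts behave uniformly under permutations of the intermediate colors, and for $n=4$ the candidate value matches the lower bound from Proposition \ref{lbrg}.

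The main obstacle is the detailed combinatorial bookkeeping during the polyhedral glue moves: after each cancellation, the neighborhoods of the retained vertices change, so one must verify that the subsequent moves still satisfy both the $n$-ball condition on $A_1, A_2$ and the different-components condition in $\Gamma_{\widehat{c}}$. A secondary subtle point is to check that the cycle counts, and hence the regular genus, depend only on $n$ and not on the specific first-method $\mathbb{Z}_2$-characteristic function $\lambda$; this uniformity should follow from the involution-based symmetry of the construction in conjunction with the parity argument of Lemma \ref{on}, which affects orientability but not the number of bicolored cycles.
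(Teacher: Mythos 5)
Your construction of $\Gamma'$ follows the paper's route essentially verbatim: the same sequence of $n-1$ polyhedral glue moves, one for each color $c$ with $g_{\widehat{c}}=2$, each cancelling a pair of $2^{n-2}$-vertex cubes and removing $2^{n-1}$ vertices, for a total of $(n-1)2^{n-1}$ removed vertices and the correct final count $2^{n-1}(n+1)$. That half of the proposal is sound.

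The genus computation, however, has a concrete error: the cyclic permutation $\varepsilon=(0,1,2,\dots,n)$ does \emph{not} realize the stated genus. After the glue moves, the bicolored cycles of $\Gamma'$ are not uniformly distributed: there is a distinguished family $S=\{\{n-1,n\},\{n-2,n\},\{n-3,n-1\},\dots,\{1,3\},\{0,2\},\{0,1\}\}$ of exactly $n+1$ color pairs, each carrying $2^{n-2}$ six-cycles (descendants of the eight- and six-cycles already present in the triangulated prisms), so that $g_{\{k,l\}}=n2^{n-3}$ for $\{k,l\}\in S$ while $g_{\{k,l\}}=(n+1)2^{n-3}$ otherwise. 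Your permutation uses the consecutive pairs $\{0,1\}$ and $\{n-1,n\}$, both of which lie in $S$, so your sum is $2\cdot n2^{n-3}+(n-1)(n+1)2^{n-3}=2^{n-3}(n^2+2n-1)$, not the claimed $2^{n-3}(n+1)^2$; this yields $\rho_\varepsilon=1+2^{n-4}(n^2-2n-1)$, which exceeds the target by $2^{n-3}$. The missing idea is to track where the six-cycles land and then choose a cyclic order all of whose consecutive pairs avoid $S$; the paper uses $\varepsilon=(0,n,1,2,\dots,n-1)$, which works for $n\ge 4$. Your closing argument for minimality over all permutations is also shakier than necessary: since $\Gamma'$ has no bicolored $2$-cycles, every pair satisfies $g_{\{k,l\}}\le V(\Gamma')/4=(n+1)2^{n-3}$, so any permutation whose consecutive pairs all attain this bound automatically maximizes $\chi_\varepsilon$ and minimizes $\rho_\varepsilon$; no appeal to the $\mathbb{Z}_2^n$-symmetry or to Proposition \ref{lbrg} (which only covers $n=4$) is needed.
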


\begin{proof}
Let us consider the gem $\Gamma$ of $M^n(\lambda)$ that is obtained using the construction given above (for an example, see Figure \ref{fig1}(a)). Due to the fact $(\ast)$ as in the proof of Lemma \ref{on}, we have $G_1$ and $G_2$ both representing $\mathbb{RP}^{n-1}\times I$. Since elements of $\Z$ are involutory, we get from the generalization of Figure \ref{table} that $g_{\{0,1\}}=g_{\{n-1,n\}}=2^{n-2}(n-1)$, of which $2^{n-2}$ are eight-cycles (each eight-cycle involves vertices of both $G_1$ and $G_2$) and $2^{n-2}(n-2)$ are four-cycles. Also, $g_{\{i,i+1\}}=2^{n-2}(n-1)$, of which $2^{n-1}$ are six-cycles ($G_1$ and $G_2$ each have $2^{n-2}$ six-cycles) and $2^{n-2}(n-3)$ are four-cycles, for all $1\le i\le n-2$. For all other pairs of colors, we have $g_{\{k,l\}}=n2^{n-2}$, where all the components are four-cycles. It is clear that $g_{\hat{0}}=g_{\hat{n}}=1$ and $g_{\hat{j}}=2$ for all $1\le j\le n-1$. To obtain a crystallization from this gem of $M^n(\lambda)$, we will use polyhedral glue moves.

Consider color \(n-1\). Let \(\Lambda_1\) and \(\Lambda_1'\) be two \((n-2)\)-cubes in \(G_1\) regularly colored by \(\{0,1,\dots,n-3\}\), consisting of the vertices \(T_i^1\) and \(T_i^2\), respectively, where \(1 \le i \le 2^n\). There exists an isomorphism \(\Phi_1\) between these two cubes such that \(T_k^1\) and \(\Phi(T_k^1)\) are connected by an edge colored \(n-1\). From the generalization of Figure \ref{table}, we always have such \(\Lambda_1\) and \(\Lambda_1'\). Therefore, we apply a polyhedral glue move with respect to \((\Phi_1, \Lambda_1, \Lambda_1', n-1)\) to get another gem \(\Gamma^1\) of \(M^n(\Lambda)\) with \(n2^n - 2^{n-1}\) vertices. In the colored triangulation of \(M^n(\Lambda)\), \(\mathcal{K}(\Gamma^1)\), there is exactly 1 vertex colored \(n-1\), 1 vertex colored 0, 1 vertex colored \(n\), and 2 vertices colored \(j\) for all \(1 \le j \le n-2\).

As a consequence of this polyhedral move, $2^{n-2}$ eight-cycles colored by $\{n-1,n\}$ become six-cycles, $2^{n-2}$ six-cycles colored by $\{n-2,n-1\}$ in $G_1$ become $4$-cycles, and $2^{n-3}$ four-cycles colored by $\{k,n-1\}$ gets removed completely for all $k\in \Delta_n\backslash \{n-2,n-1,n\}$. Each pair of four-cycles, out of $2^{n-2}$ pairs, colored by $\{n-2,n\}$ merges to form $2^{n-2}$ six-cycles, and $2^{n-3}$ four-cycles colored by $\{k,l\}$ are completely removed when both $k, l\in \{0,1,\dots,n-3\}$. Each pair of four-cycles, out of $2^{n-3}$ pairs, colored by $\{k,l\}$ merges to form $2^{n-3}$ four-cycles when either $k$ or $l$, but not both, belongs to $\{0,1,\dots,n-3\}$. In brief, all the eight-cycles colored by $\{n-1,n\}$ in $\Gamma$ are six-cycles in $\Gamma^1$. There are $2^{n-2}$ six-cycles colored by $\{n-2,n\}$ in $\Gamma^1$, and all the other bi-colored cycles are either four-cycles or remain the same cycles as in $\Gamma$. 

Apply a polyhedral move with respect to $(\Phi_j, \Lambda_{j}, \Lambda_{j}^\prime, n-j)$ to obtain $\Gamma^j$ for all $2\le j\le n-1$ successively, where $\Lambda_j$ and $\Lambda_j^\prime$ are two $(n-2)$-cubes in $G_1$ when $j$ is odd (resp. in $G_2$ when $j$ is even), regularly colored by $\{0,1,\dots,n-j-2,n-j+2,n-j+3,\dots,n\}$, consisting of the vertices $T_i^j$ and $T_i^{j+1}$, respectively, where $1\le i \le 2^n$. Due to the fact $(\ast)$, a polyhedral move with respect to $(\Phi_k, \Lambda_{k}, \Lambda_{k}^\prime, n-k)$ does not change the size of a bi-colored cycle that was introduced in $\Gamma^j$ for $1\le j\le k-1$. In $\Gamma^k$, $2^{n-2}$ six-cycles colored by $\{n-k-1,n-k+1\}$ are introduced, where each six-cycle is formed as a result of the merger of two four-cycles because neither $k-1$ nor $k-2$ is in $\{0,1,\dots,n-k-2,n-k,n-k+2,n-k+3,\dots,n\}$. If $k=n-1$, then all the eight-cycles colored by $\{0,1\}$ become six-cycles in $\Gamma^{n-1}$. 

Finally, after applying all the polyhedral moves as explained above, we get a crystallization $\Gamma^\prime=\Gamma^{n-1}$. In $\Gamma^\prime$, we have $2^{n-2}$ six-cycles for each of the color pairs in $S= \{\{n-1,n\},$ $\{n-2,n\},\{n-3,n-1\},\dots,\{1,3\},\{0,2\},\{0,1\} \}$, and all the other bi-colored cycles are of length $4$. Also, note that the number of vertices in $\Gamma^\prime$ is $n2^n-(n-1)2^{n-1}=(n+1)2^{n-1}$. Therefore, we have $g_{\{k,l\}}=(n+1)2^{n-3}$ when $\{k,l\}$ does not belong to $S$, and $g_{\{k,l\}}=n2^{n-3}$ when $\{k,l\}\in S$.

For $n=3$, we have $S=\{\{2,3\}, \{1,3\}, \{0,2\}, \{0,1\}\}$. Therefore, the regular genus $\rho(\Gamma^{\prime})=\rho_\varepsilon(\Gamma^{\prime})=2$, where $\varepsilon=(0,3,1,2)$. If $n> 3$, we calculate the regular genus of $\Gamma^\prime$ with respect to the permutation $\varepsilon=(\varepsilon_0,\varepsilon_1,\dots,\varepsilon_n)=(0,n,1,2,\dots,n-1)$,
$$\rho_\varepsilon(\Gamma^\prime)=1-\frac{1}{2}\left(\sum_{i \in \mathbb{Z}_{n+1}}g_{\varepsilon_i\varepsilon_{i+1}} + (1-n)\frac{V(\Gamma^\prime)}{2}\right)=1-\frac{1}{2}\left( (n+1)^2 2^{n-3}+ (1-n) (n+1)2^{n-2}\right)$$ 
$$=1-\frac{1}{2}\left( (n+1) 2^{n-3} ((n+1)+ (1-n)2)\right)=1-\frac{1}{2}\left( (n+1) 2^{n-3} (3-n)\right)=1+2^{n-4}(n^2-2n-3).$$

\noindent The permutation $\varepsilon=(\varepsilon_0,\varepsilon_1,\dots,\varepsilon_n)=(0,n,1,2,\dots,n-1)$ maximizes the sum of the number of bi-colored cycles, since no pair $(\varepsilon_i,\varepsilon_{i+1})$ belongs to $S$, for $i\in \mathbb{Z}_{n+1}$. Hence, $\rho_\varepsilon(\Gamma^\prime)$ is the  regular genus of the crystallization $\Gamma^\prime$ of $M^n(\lambda)$. 
\end{proof}

In Example \ref{ex}, we first apply the polyhedral glue move with respect to $(\Phi_1,\Lambda_1,\Lambda_1^{\prime},3)$, where $\Lambda_1=\{T_1^1,T_{16}^1,T_{2}^1,T_{14}^1\}$, $\Lambda_1^{\prime}=\{T_1^2,T_{16}^2,T_{2}^2,T_{14}^2\}$ and $\Phi_1(T_k^1)=T_k^2$ for all $T_k^1 \in \Lambda_1$ (Figures \ref{fig1}(a), \ref{fig1}(b)). Then, we apply the polyhedral glue move with respect to $(\Phi_2,\Lambda_2,\Lambda_2^{\prime},2)$, where $\Lambda_2=\{T_{10}^2, T_{6}^2, T_{9}^2, T_{7}^2\}$, $\Lambda_2^{\prime}=\{T_{10}^3,T_{6}^3,T_{9}^3,T_{7}^3\}$ and $\Phi_2(T_k^2)=T_k^3$ for all $T_k^2 \in \Lambda_2$ (Figure \ref{fig2}(a)). Finally, applying the polyhedral glue move with respect to $(\Phi_3,\Lambda_3,\Lambda_3^{\prime},1)$, where $\Lambda_3=\{T_{16}^3, T_{8}^3, T_{14}^3, T_{3}^3\}$, $\Lambda_2^{\prime}=\{T_{16}^4, T_{8}^4, T_{14}^4, T_{3}^4\}$ and $\Phi_3(T_k^3)=T_k^4$ for all $T_k^3 \in \Lambda_3$, we get a crystallization of $M^4(\lambda)=\mathbb{RP}^3\times\mathbb{S}^1$ with $40$ vertices (Figure \ref{fig2}(b)).

\begin{remark}
{\rm If $\lambda:\mathcal F(\Delta^{n-1} \times I) \to \Z $ is a $\mathbb{Z}_2$-characteristic function satisfying the conditions of the second method, then the same polyhedral glue moves as in the proof of Theorem \ref{main} will give a $2^{n-1}(n+1)$-vertex crystallization of $\mathbb{RP}^n \# \mathbb{RP}^n$ with regular genus $1+2^{n-4}(n^2-2n-3).$ 

However, a $(2^{n+1} - 2)$-vertex crystallization of $\mathbb{RP}^n \# \mathbb{RP}^n$ with regular genus $2 + 2^{n-2}(n - 3)$ is known:
Take two copies of the $2^n$-vertex crystallization $\Gamma$ of $\mathbb{RP}^n$. If we delete one vertex from each copy and join the hanging edges of the same colors, then the resulting graph $\Gamma \# \Gamma$, called the graph connected sum, is a crystallization of $\mathbb{RP}^n \# \mathbb{RP}^n$ with $(2^{n+1} - 2)$ vertices. Since each $g_{\{i,j\}} = 4$ in the $2^n$-vertex crystallization $\Gamma$ of $\mathbb{RP}^n$, the crystallization $\Gamma \# \Gamma$ has $2^{n-1} - 1$ components for any pair of colors. Therefore, the regular genus of $\Gamma \# \Gamma$ is $2 + 2^{n-2}(n - 3)$.
}
\end{remark}

Consider \( n = 4 \) in Theorem \ref{main}. Since we know that, corresponding to a \(\mathbb{Z}_2\)-characteristic function \(\lambda:\Delta^3 \times I \to \mathbb{Z}_2^4\) that satisfies the conditions of the first method, the small cover \( M^4(\lambda) \) is an \(\mathbb{RP}^3\)-bundle over \(\mathbb{S}^1\), we have that the Euler characteristic of \( M^4(\lambda) \) is \(\chi(\mathbb{RP}^3) \cdot \chi(\mathbb{S}^1) = 0\) and \(\mathrm{rk}(\pi_1(M^4(\lambda))) = 2\). Therefore, from Proposition \ref{lbrg}, it follows that the regular genus of \( M^4(\lambda) \) is greater than or equal to $6$. Hence, Theorem \ref{main} and Remark \ref{remark: count} implies the following result.

\begin{corollary} \label{corollary:regular genus 6}
There are $4$ orientable and $4$ non-orientable small covers with regular genus $6$ over $\Delta^3\times I$ up to D-J equivalence, corresponding to $\mathbb{Z}_2$-characteristic functions satisfying the conditions of the first method. All these $8$ small covers over $\Delta^3\times I$ represent some $\mathbb{RP}^3$-bundle over $\mathbb{S}^1.$ 
\end{corollary}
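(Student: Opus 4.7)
The plan is to specialize the earlier results to $n=4$ and combine the upper bound coming from Theorem \ref{main} with the lower bound coming from Proposition \ref{lbrg}. First I would invoke Lemma \ref{lemma:D-J}, which gives $1+2^{n-1}$ D-J equivalence classes of small covers over $\Delta^{n-1}\times I$; exactly one of these comes from the second method (and yields $\mathbb{RP}^n\#\mathbb{RP}^n$), so the first method accounts for $2^{n-1}=8$ classes when $n=4$. Next I would apply Lemma \ref{on} with $n=4$ even: the orientability criterion there (parity of the number of $b_i$ with $1\le i\le n-1$ appearing in $b_{n+1}-b_n$) partitions the first-method classes into $2^{n-2}=4$ orientable and $2^{n-2}=4$ non-orientable ones.

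For the regular genus, Theorem \ref{main} gives, for each such $\lambda$, a crystallization $(\Gamma',\gamma')$ of $M^4(\lambda)$ of regular genus
\[
1+2^{n-4}(n^{2}-2n-3)=1+2^{0}(16-8-3)=6,
\]
so $\mathcal{G}(M^4(\lambda))\le 6$. For the matching lower bound, I would use the discussion immediately preceding the corollary: the argument in the proof of Lemma \ref{on} shows that each $M^4(\lambda)$ (for $\lambda$ of first-method type) is an $\mathbb{RP}^{3}$-bundle over $\mathbb{S}^{1}$, obtained by gluing the two copies of $\mathbb{RP}^{3}\times I$ represented by $\Gamma_1$ and $\Gamma_2$. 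Consequently $\chi(M^4(\lambda))=\chi(\mathbb{RP}^3)\cdot\chi(\mathbb{S}^1)=0$, and the long exact sequence of the fibration (or the Wang sequence) identifies $\pi_{1}(M^4(\lambda))$ as an extension of $\mathbb{Z}$ by $\mathbb{Z}_{2}$, so that $\mathrm{rk}(\pi_{1}(M^4(\lambda)))=2$. Substituting into Proposition \ref{lbrg} yields
\[
\mathcal{G}(M^4(\lambda))\ge 2\chi(M^4(\lambda))+5\cdot 2-4=6.
\]

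Combining the two inequalities gives $\mathcal{G}(M^4(\lambda))=6$ for each of the eight first-method D-J equivalence classes, which, by Lemma \ref{on}, are evenly split as four orientable and four non-orientable $\mathbb{RP}^{3}$-bundles over $\mathbb{S}^{1}$. The only real subtlety is the lower-bound step, where one must justify $\mathrm{rk}(\pi_{1}(M^4(\lambda)))=2$ uniformly over the eight classes; I would handle this by noting that the bundle structure forces $\pi_{1}$ to be a $2$-generator group with $\mathbb{Z}_{2}$ normal and $\mathbb{Z}$ quotient, and therefore of rank exactly $2$ (it cannot be cyclic, since it surjects onto $\mathbb{Z}$ and still contains the nontrivial fibre class coming from $\mathbb{RP}^{3}$). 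Everything else reduces to plugging $n=4$ into the formulas already established.
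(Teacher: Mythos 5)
Your proposal is correct and follows essentially the same route as the paper: the count of $4+4$ classes comes from Lemma \ref{lemma:D-J} and the orientability dichotomy in Lemma \ref{on} specialized to $n=4$, the upper bound $\mathcal{G}(M^4(\lambda))\le 6$ from Theorem \ref{main}, and the matching lower bound from Proposition \ref{lbrg} using $\chi(M^4(\lambda))=0$ and $\mathrm{rk}(\pi_1(M^4(\lambda)))=2$ for an $\mathbb{RP}^3$-bundle over $\mathbb{S}^1$. Your extra justification of the rank computation via the fibration sequence is a detail the paper states without proof, but it is the same argument in substance.
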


\begin{remark}\label{remark: PL isomorphic}
    {\rm These four orientable (resp.\ non-orientable) small covers over $\Delta^3 \times I$  are not D-J equivalent but are PL homeomorphic, since the crystallizations, obtained by applying Theorem \ref{main}, of these four orientable (resp.\ non-orientable) small covers are isomorphic, with the following isomorphism signature (obtained using Regina \cite{burton2013regina}): 

    \noindent OvvvvLLAAwvwzQQwPAQMMQQQwzMMQQQQQQclimpjnruywzBtBzyzDxyxCxFFDGE\\vuBxCDFCvIJKLKLLKMIINJNMKMNJIMJNLaaaaaaaaaaaaaaaaaaaaaaaaaaaaaaaaaaaaaa\\aaaaaaaaaaaaaaaaaaaaaaaaaaaaaaaaaaaaaaaaaaaaaaaaaaaaaaaaaaaaaaaaaaaaaaaaaaaaaaaaa\\aaa

    \noindent (resp. OvvvvLLAAwvwzQQwPAQMMwQQMPQQQQQLQQclimpjnruywzBtBzyzDxyxCxF\\FDGEBxCDFCIEKHJEGHCLJILJJLGKKNMNMMNNMaaaaaaaaaaaaaaaaaaaaaaaaaaaaa\\aaaaaaaaaaaaaaaaaaaaaaaaaaaaaaaaaaaaaaaaaaaaaaaaaaaaaaaaaaaaaaaaaaaaaaaaaaaaaaaa\\aaaaaaaaaaaaa).

    }
\end{remark}

 In \cite{bb18}, the notion of weak semi-simple crystallization was introduced. Let $M$ be a closed, connected 4-manifold, and let $m$ denote the rank of the fundamental group of $M$. A crystallization $(\Gamma,\gamma)$ of $M$ with color set $\Delta_4$ is called a weak semi-simple crystallization if there exists a permutation $(\varepsilon_0, \varepsilon_1, \dots, \varepsilon_4)$ of $\Delta_4$ such that $g_{\{\varepsilon_i,\varepsilon_{i+1},\varepsilon_{i+2}\}} = m + 1$ for $i \in \Delta_4$ (with the subscript of $\varepsilon$ taken modulo 5). It was shown that the regular genus of $M$ attains the lower bound of Proposition \ref{lbrg} if and only if $M$ admits a weak semi-simple crystallization. Since the \(\mathbb{RP}^3\)-bundles over \(\mathbb{S}^1\) described in Corollary \ref{corollary:regular genus 6} satisfy the equality \(\mathcal{G}(M) = 2\chi(M) + 5m - 4\), the crystallizations constructed here, with regular genus $6$, are weak semi-simple crystallizations.

\bigskip

\noindent {\bf Acknowledgment:}  The authors would like to thank the anonymous referees for many useful comments and suggestions. The first author is supported by the Institute fellowship from the Indian Institute of Technology Delhi. The second author is supported by the Mathematical Research Impact Centric Support (MATRICS) Research Grant (MTR/2022/000036) from ANRF (India).

{\footnotesize
}

\end{document}